\documentclass[11.5pt]{amsart}
\makeatletter
\@fleqnfalse
\makeatother
\usepackage{etex}
\usepackage{amsmath, amssymb}
\usepackage[frame,cmtip,arrow,matrix,line,graph,curve]{xy}
\usepackage{graphpap, color, paralist, pstricks}
\usepackage[mathscr]{eucal}
\usepackage[pdftex]{graphicx}
\usepackage[pdftex,colorlinks,backref=page,citecolor=blue]{hyperref}
\usepackage{tikz}
\usepackage{kotex}
\usepackage{array}
\newcolumntype{P}[1]{>{\centering\arraybackslash}p{#1}}
\newcolumntype{M}[1]{>{\centering\arraybackslash}m{#1}}
\usepackage{caption}
\usepackage{multirow}

\newtheorem{theorem}{Theorem}[section]
\newtheorem{proposition}[theorem]{Proposition}
\newtheorem{corollary}[theorem]{Corollary}
\newtheorem{lemma}[theorem]{Lemma}

\newtheorem{conjecture}[theorem]{Conjecture}

\theoremstyle{definition}

\newtheorem{remark}[theorem]{Remark}

\newcommand{\PP}{\mathbb{P}}

\newcommand{\CC}{\mathbb{C}}

\newcommand{\ZZ}{\mathbb{Z}}

\newcommand{\cO}{\mathcal{O} }

\newcommand{\cF}{\mathcal{F} }

\newcommand{\cI}{\mathcal{I} }
\newcommand{\cK}{\mathcal{K} }
\newcommand{\cM}{\mathcal{M} }
\newcommand{\cN}{\mathcal{N} }

\newcommand{\cS}{\mathcal{S} }

\newcommand{\rH}{\mathrm{H} }

\newcommand\bG{\mathbf{G}}
\newcommand\bH{\mathbf{H}}

\newcommand\bM{\mathbf{M}}

\newcommand{\Coh}{\text{Coh}}
\newcommand{\DT}{\mathrm{DT}}

\newcommand\Hom{\mathrm{Hom} }
\newcommand\Ext{\mathrm{Ext} }

\newcommand{\Gr}{\mathrm{Gr} }

\newcommand\SL{\mathrm{SL}}

\newcommand\lr{\rightarrow}

\newcommand{\ses}[3]{0\lr{#1}\lr{#2}\lr{#3}\lr 0}

\hypersetup{%
citecolor=blue,%
linkcolor=blue,%
}

\begin{document}

\title[DT-GV correspondences on the Mukai-Umemura variety]{DT-GV correspondence on the Mukai-Umemura variety}

\author{Kiryong Chung}
\address{Department of Mathematics Education, Kyungpook National University, 80 Daehakro, Bukgu, Daegu 41566, Korea}
\email{krchung@knu.ac.kr}

\author{Joonyeong Won}
\address{Department of Mathematics, Ewha Womans University, 52, Ewhayeodae-gil, Seodaemun-gu, Seoul, 03760, Republic of Korea} 
\email{leonwon@ewha.ac.kr}

\keywords{Mukai-Umemura variety, Donaldson-Thomas invariants, Gopakumar-Vafa invariants}
\subjclass[2020]{14N35, 14C15, 14C35, 14J45}

\begin{abstract}
We compute Donaldson--Thomas (DT) invariants and their descendant invariants 
for the local Calabi--Yau $4$--fold over the Mukai--Umemura variety via several localization formulas. Assuming that the genus-one Gopakumar--Vafa (GV) type invariants vanish, our computations verify the predictions of Cao, Maulik, and Toda.
\end{abstract}

\maketitle
\section{Introduction}
\subsection{Motivation and result}
Let $Y$ be a Calabi--Yau $4$-fold and 
$\beta \in H_2(Y,\mathbb{Z})$ be a curve class. 
Let $\bM_\beta(Y)(=\bM)$ denote the moduli space of one-dimensional 
stable sheaves $F$ on $Y$ satisfying $[F]=\beta$ and $\chi(F)=1$. 
Given a class $\gamma \in \rH^{4-2i}(Y,\mathbb{Z})$, 
the \emph{insertion} is defined by
\[
\tau_i(\gamma)
=
\pi_{\bM*}\!\left(
\pi_Y^*\gamma \cup \mathrm{ch}_{i+3}(\mathcal{F})
\right),
\]
where $\cF\in \mathrm{Coh}(\bM_\beta(Y)\times Y)$ is the universal sheaf with the \emph{trivial determinant} (i.e., $\det(R\pi_{\bM*}\cF)=\cO_{\bM}$). The primary (for $i=0$) or descendent (for $i\geq1$) invariants are defined by the integration of the insertion over the virtual class $[\bM_{\beta}(Y)]^{\mathrm{vir}}$:
\begin{equation}\label{intinse}
\langle \tau_i(\gamma) \rangle_{\beta} := \int_{[\bM_\beta(Y)]^{\mathrm{vir}}} \tau_i (\gamma).
\end{equation}
The integral $\langle \tau_0(\gamma) \rangle_{\beta}$ is called \emph{$\DT$-invariants} (or \emph{primary invariant}).
When $i \geq 1$, we call $\tau_i(\gamma)$ \emph{descendant insertions}, and its integrations over the virtual class are called \emph{descendant invariants}.

The DT theory of Calabi--Yau 4-folds has recently attracted significant attention in enumerative geometry, motivated by its rich geometric structures and deep connections to other curve counting theories. In particular, Cao, Maulik, and Toda proposed a series of conjectures predicting a precise correspondence between DT invariants of Calabi--Yau 4--folds and the Gopakumar--Vafa (GV) type invariants introduced by Klemm and Pandharipande (\cite{CMT18, CT21, KP08}). More precisely, it states that
\begin{conjecture}[\protect{\cite[Conjecture 0.2]{CMT18} and \cite[Conjecture 0.2]{CT21}}]\label{conjor}
Let $n_{0,\beta}(-)$ (resp. $n_{1,\beta}$) denote genus $0$ (resp. $1$) GV-type invariants and $m_{\beta_1,\beta_2}$ be the meeting invariants defined in \cite{KP08}. Under the suitable orientation on the moduli space $\bM_{\beta}(Y)$, we have
\begin{enumerate}[(a)]
\item $ n_{0,\beta}(\gamma_0)=\langle \tau_0(\gamma_0) \rangle_{\beta}$ for $\gamma_0\in \rH^{4}(Y, \ZZ)$.
\item Furthermore, we have an linear identity:
\begin{equation}\label{eq:mov1}
\langle \tau_1(\gamma_1) \rangle_\beta
=
\frac{n_{0,\beta}(\gamma_1^2)}{2(\gamma_1\cdot\beta)}
-
\sum_{\beta_1+\beta_2=\beta}
\frac{(\gamma_1\cdot\beta_1)(\gamma_1\cdot\beta_2)}
{4(\gamma_1\cdot\beta)}
\, m_{\beta_1,\beta_2}
-
\sum_{k\ge1,\,k|\beta}
\frac{\gamma_1\cdot\beta}{k}\,
n_{1,\beta/k}
\end{equation}
with respect to $\gamma_1\in \rH^2(Y,\ZZ)$.
\end{enumerate}
\end{conjecture}
In this paper, we study the case where $Y = \mathrm{Tot}(K_X)$ is the total space of the canonical bundle of the \emph{Mukai-Umemura variety} $X$ (i.e. the unique smooth prime Fano threefold of genus $12$ admitting a nontrivial $\mathrm{SL}_2(\mathbb{C})$-action). For the defining relations of the meeting invariants $m_{\beta_1,\beta_2}$ for our local Fano $4$-fold $Y$, see the first paragraph of Section \ref{subproof}.

It was previously known that Conjecture \ref{conjor} holds for curve classes $\beta = d[\mathrm{line}]$, $1 \le d \le 3$ and $Y= \mathrm{Tot}(K_X)$ (\cite[Theorem 4.2]{CLW24}). For $d \le 3$, the explicit descriptions of the universal sheaf on the moduli space obtained in \cite[Lemma 3.1]{AF06}, \cite[Lemma 4.1]{Fae14}, and \cite[Theorem 2.4]{KS04} are sufficient to prove the conjecture. However, the case $d=4$ requires a more delicate analysis of the structure of stable sheaves and their contributions to the virtual cycle. In this paper, we extends our results degree up to $d=4$.
\begin{theorem}[$=$Theorem \ref{pfmeeting1}]\label{maino}
For $Y=\mathrm{Tot}(K_X)$ with the Mukai-Umemura variety $X$, the equality \eqref{eq:mov1} in Conjecture \ref{conjor} holds for $\beta = 4[\mathrm{line}]$ whenever we assume that $n_{1, d}=0$ for $1\leq d\leq 4$.
\end{theorem}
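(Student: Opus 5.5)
The plan is to compute both sides of \eqref{eq:mov1} for $\beta=4[\mathrm{line}]$ and compare them, using the $\SL_2(\CC)$-symmetry of $X$ through a maximal torus $T=\CC^*\subset \SL_2(\CC)$, together with the fibre $\CC^*$ scaling $K_X$.

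\textbf{Reduction to $X$.} Since $Y=\mathrm{Tot}(K_X)$ and $X$ is Fano, every one-dimensional stable sheaf on $Y$ is scheme-theoretically supported on the zero section. So $\bM_4(Y)\cong \bM_4(X)$, where $\bM_4(X)$ is the moduli space of stable sheaves on $X$ with $[F]=4[\mathrm{line}]$ and $\chi=1$. I will show $\bM_4(X)$ is smooth of expected dimension $-K_X\cdot\beta=4$. This should follow from $\Ext^2_X(F,F)=0$; by Serre duality that reduces to $\Hom(F,F\otimes K_X)=0$, which holds because $F$ is stable and $K_X$ is anti-ample. Hence $\mathrm{Ob}=\Ext^2_Y(F,F)\cong \Ext^1_X(F,F)\oplus\Ext^1_X(F,F)^\vee$, and $[\bM]^{\mathrm{vir}}=\pm\, e(T_{\bM})\cap$-dual, i.e. $\pm[\bM_4(X)]$ up to the choice of orientation. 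Fixing the orientation by the $d\le3$ conventions of \cite{CLW24}, both sides become honest integrals on the smooth $4$-fold $\bM_4(X)$.

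\textbf{Computing the left side.} $\langle\tau_1(\gamma_1)\rangle_4$ is the integral of $\pi_*(\pi_X^*H\cdot \mathrm{ch}_4(\cF))$ with the determinant-normalized universal sheaf. I would compute it by Atiyah--Bott localization with respect to $T$. First classify the $T$-fixed stable sheaves: these are supported on $T$-invariant degree-$4$ curves. On Mukai--Umemura the invariant lines through the two fixed points form finitely many configurations, e.g. the unique invariant line, its multiple structures, and invariant conics and twisted cubics with lines. I need to check that the fixed locus is isolated, or else handle small fixed components. At each fixed point I compute the weights of $T_{\bM}=\Ext^1(F,F)$ from a $T$-equivariant locally free resolution of $F$ on $X$, and $\mathrm{ch}_4(\cF|_{F})$ by Grothendieck--Riemann--Roch along the support.

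\textbf{Computing the right side.} $n_{0,4}(\gamma_1^2)$ equals the genus-zero GW count with two insertions, and I would obtain it via the \cite{CMT18} relation to $\langle\tau_0(\gamma)\rangle$. To stay independent of (a), it is better to compute it by the same localization on $\bM_4$ and on the moduli of rational curves. The meeting invariants $m_{\beta_1,\beta_2}$ for $\beta_1+\beta_2=4$ are determined recursively from $n_{0,d}$ with $d\le3$, via the defining relations in Section \ref{subproof}, and these are known from \cite{CLW24}. With $n_{1,d}=0$ assumed, the last sum drops, and the identity reduces to a numerical comparison.

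\textbf{Main obstacle.} The main difficulty is the structure of the fixed locus in $\bM_4$: non-reduced supports (double conics, quadruple lines on the special line) and non-locally-free sheaves. The equivariant $\Ext^1$ weights, and possibly non-isolated fixed components, must be computed there. For these I would try first to use the explicit description of $\bM_4(X)$ via the $\SL_2$-geometry (e.g. as a space of binary forms or a $\Gr$-type quiver model). On such a model the universal sheaf has an equivariant monad, and the localization reduces to computing characters.
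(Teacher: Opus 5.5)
\textbf{The gap is in your reduction step.} Serre duality on the threefold $X$ gives $\Ext^2_X(F,F)\cong\Ext^1_X(F,F\otimes K_X)^\vee$. Stability plus anti-ampleness of $K_X$ only kills $\Hom(F,F\otimes K_X)\cong\Ext^3_X(F,F)^\vee$, which is what gives $\Ext^1_Y=\Ext^1_X$.

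In fact $\Ext^2_X(F,F)$ never vanishes here. For one-dimensional $F$, Hirzebruch--Riemann--Roch gives $\chi_X(F,F)=0$. Using $\dim\Hom=1$ and $\Ext^3=0$, this gives $\dim\Ext^1_X(F,F)-\dim\Ext^2_X(F,F)=1$. The value $4=-K_X\cdot\beta$ is the dimension of the tangent space $\Ext^1_X(F,F)$, which is $4$ at every fixed point. So the obstruction space is $3$-dimensional; at the quadruple line $D_4$ its weights are $t^{14}+t^{16}+t^{18}$.

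The correct statements are these:
\begin{itemize}
\item $\Ext^2_Y\cong\Ext^2_X\oplus(\Ext^2_X)^\vee$, not $\Ext^1_X\oplus(\Ext^1_X)^\vee$.
\item $[\bM_4(Y)]^{\mathrm{vir}}=\pm[\bM_4(X)]^{\mathrm{vir}}$ for the threefold obstruction theory $(\Ext^1_X,\Ext^2_X)$.
\item This class has virtual dimension $1$, matching the complex degree $1$ of $\tau_0(h_2)$, $\tau_1(h_1)$ and $\tau_2(1)$.
\end{itemize}
Your proposed class is either the fundamental class of a $4$-fold or an Euler class of $T_{\bM}^\vee$, a $0$-cycle; both have the wrong dimension. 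Integrating $\tau_1(h_1)$ against $[\bM_4(X)]$ gives $0$ for degree reasons, even if $\bM_4(X)$ happens to be smooth. So Atiyah--Bott localization with only $e^T(\Ext^1)$ in the denominators cannot produce $-168$.

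\textbf{What is missing is the numerator $e^T(\Ext^2_X(F,F))$ in the virtual localization.} This factor is exactly what makes the paper's computation work. At $C_4$, $L_{\pm2}^2\cup Q$ and $L_2\cup L_{-2}\cup Q$ the obstruction space has a weight-zero summand, so these fixed points contribute nothing. Only $D_{\pm4}$ survive, each with
$e^T(\Ext^2)/e^T(\Ext^1)=(14\lambda)(16\lambda)(18\lambda)/\bigl((2\lambda)(4\lambda)^2(6\lambda)\bigr)$.

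Your description of the fixed locus also needs correcting. There are two invariant lines $L_{\pm2}$, no irreducible invariant twisted cubic, and no double conic. The paper takes the six fixed points from \cite{CKK25}. It computes their characters from the CM filtration of $D_4$ and Thomason's $K$-theoretic formulas, not from a global monad.

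The right-hand side then goes essentially as you describe, with one difference. The paper does not compute $n_{0,4}(h_2)$ independently via GW theory; it uses the primary number $\langle\tau_0(h_2)\rangle_4=168$ with the orientation sign $(-1)^{d+1}$. Combined with $m_{1,3}=m_{2,2}=-1008$ from the recursion, this verifies the identity numerically.
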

The proof of Theorem \ref{maino} is based on a detailed analysis of the torus-fixed stable sheaves in the moduli space (Proposition \ref{multfil} and Proposition \ref{pr:l2q}). We then apply various (virtual) localization formulas studied in \cite{AB84, BV82, GP99, Tho92, Tho93}. By organizing these contributions via localization, we explicitly compute the primary and descendent invariants (Proposition \ref{mainpro}). This establishes the equality \eqref{eq:mov1} in Conjecture \ref{conjor} (Section \ref{subproof}). In the computations, one of the key features is that for any fixed curve other than  the multiplicity $4$-lines, there always exists a weight-zero component in the obstruction space (Lemma \ref{d1l}, Proposition \ref{pr:l2q} and Proposition \ref{qll}), which prevents it from contributing to the localization formula. Moreover, the same phenomenon occurs for $d\leq 3$ and thus we can easily recover the our previous results (\cite[Proposition 3.17]{CLW24}).
\subsection*{Acknowledgements}
The authors gratefully acknowledge the many helpful suggestions and comments of Sanghyeon Lee during the preparation of the paper. The first named author was supported by the National Research Foundation of Korea NRF2021R1I1A3045360. The second named author was supported by the National Research Foundation of Korea RS-2025-00513064.

\section{Preliminary}
In this section, we present several localization formulas and recall results from \cite{CKK25} that will be used later.
\subsection{Localization formulas for varieties with isolated fixed points}
Let $T$ ($\cong \CC^*$) be an algebraic torus and $M$ be a $T$-space whose fixed locus $M^T $ is finite. We work in equivariant cohomology $H_T^*(-)$ and homology $\rH_*^T(-)$. Let us denote $e^T(-)$ by the equivariant Euler class. 
\begin{theorem}[Virtual localization \protect{\cite{GP99}}]\label{virloc}
Assume that $M$ carries a $T$-equivariant perfect obstruction theory,
with virtual fundamental class $[M]^{\mathrm{vir}} \in H^T_*(M)$.
Then for any $u \in H_T^*(M)$,
\begin{equation}\label{eq:virtual-localization}
\int_{[M]^{\mathrm{vir}}} u
=
\sum_{x \in M^T}
\frac{u|_x}
     {e^T\!\bigl(T_x^{\mathrm{vir}} M\bigr)}.
\end{equation}
\end{theorem}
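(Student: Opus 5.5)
The plan is to reduce the statement to the classical Atiyah--Bott localization theorem on a smooth ambient space, following the strategy of \cite{GP99}. I would first fix the hypotheses under which the formula makes sense. I assume $M$ admits a $T$-equivariant closed embedding into a smooth $T$-variety $W$; for our moduli spaces such embeddings exist. I work in the localized equivariant groups $H^T_*(-)\otimes_{\mathbb{Q}[t]}\mathbb{Q}[t,t^{-1}]$. At each fixed point $x\in M^T$, the two-term complex $T^{\mathrm{vir}}_xM = [T_x M \to \mathrm{Ob}_x]$ splits into weight spaces. The $T$-fixed part defines the induced perfect obstruction theory on the reduced point $\{x\}$, and hence the virtual class $[x]^{\mathrm{vir}}$. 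The moving part defines the virtual normal bundle $N^{\mathrm{vir}}_x$. If the fixed part has a nonzero obstruction piece, then $[x]^{\mathrm{vir}}=0$. This matches the convention that such a point contributes nothing: formally $e^T$ of the full $T^{\mathrm{vir}}_x$ has a zero factor in the denominator, which we read as a vanishing contribution. In the remaining case the fixed part is zero, $[x]^{\mathrm{vir}}=[x]$, and $e^T(N^{\mathrm{vir}}_x)=e^T(T^{\mathrm{vir}}_xM)=e^T(T_xM)/e^T(\mathrm{Ob}_x)$ is invertible after localization.

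The core claim to prove is the cycle-level identity
\[
[M]^{\mathrm{vir}}=\sum_{x\in M^T}\iota_{x*}\frac{[x]^{\mathrm{vir}}}{e^T(N^{\mathrm{vir}}_x)}
\quad\text{in } H^T_*(M)_{\mathrm{loc}}.
\]
Once this holds, the formula \eqref{eq:virtual-localization} follows by capping with $u$ and pushing forward to a point, using $\iota_x^*u=u|_x$. To prove the identity, I would write $[M]^{\mathrm{vir}}=0^!_{E_1}[C]$. Here $E=[E^{-1}\to E^0]$ is a global equivariant resolution of the obstruction theory, obtained from the embedding into $W$. The intrinsic normal cone then gives an equivariant cone $C\subset E_1=(E^{-1})^\vee$.

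First, I would show that the cone $C$ restricted over $M^T$ decomposes compatibly. Its fixed part is the cone of the fixed obstruction theory on $M^T$, and its moving directions are governed by $N^{\mathrm{vir}}$. This is a functoriality statement for intrinsic normal cones under passing to fixed loci. It uses that the fixed part of $E|_{M^T}$ is a perfect obstruction theory for $M^T$, which comes from the fixed part of the $T$-equivariant cotangent complex.

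Second, I would apply the Atiyah--Bott/Edidin--Graham localization for the equivariant Chow/homology of $C$ and of $W$. A class supported on $M$ equals the pushforward from the fixed locus divided by the Euler class of the normal bundle in $W$. I would then use the excess intersection formula to trade the normal bundle of $M^T$ in $W$, together with the moving part of $E_1$, for $N^{\mathrm{vir}}$. In the isolated case the moving part of $E_1$ at $x$ is $\mathrm{Ob}_x^{\mathrm{mov}}$, and the normal directions contribute $T_x^{\mathrm{mov}}$.

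The main obstacle is the second step: showing that the refined Gysin map $0^!_{E_1}$ commutes with localization. Concretely, I need $\iota^!$ applied to $[C]$ to equal the cone of the fixed theory, with the correct normal-bundle correction, and not merely up to torsion classes. I would handle this by deforming $C$ to its normal cone along $C|_{M^T}$. This produces a $\mathbb{C}^*\times T$-equivariant family, and the fixed/moving splitting is visible on it. The key input is the equivariant localization theorem of Edidin--Graham, that $\iota_*$ is an isomorphism after inverting $t$. Combined with homotopy invariance along the deformation, this lets me compare the two sides.
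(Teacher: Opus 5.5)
The paper gives no argument of its own here (the statement is quoted from \cite{GP99}), and your outline is the Graber--Pandharipande argument: equivariant embedding and global resolution, the fixed part of the obstruction theory as a perfect obstruction theory on $M^T$, comparison of the obstruction cone over $M^T$ with the cone of the fixed theory, and Edidin--Graham localization; so you are following the same route. Your explicit hypotheses, an equivariant embedding into a smooth $W$ and $(T_xM)^T=0$ (which makes each fixed point reduced and $[x]^{\mathrm{vir}}$ equal to $[x]$ or $0$), are left implicit in the paper's formulation but hold in its application; your identifications $E_1|_x^{\mathrm{mov}}=\mathrm{Ob}_x^{\mathrm{mov}}$ and ``normal directions $=(T_xM)^{\mathrm{mov}}$'' are literally true only for a minimal local model, and in general hold only after cancellation in $K_T(\mathrm{pt})$.
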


\begin{remark}[Smooth case \protect{\cite{AB84, BV82}}]\label{smvir}
If $M$ is smooth, for any $u \in H_T^*(M)$, we have
\begin{equation}
\int_{[M]} u
=
\sum_{x \in M^T}
\frac{u|_x}{e^T(T_xM)}.
\end{equation}
\end{remark}
\subsection{Some facts in equivariant K-theory}
In \cite{Tho92, Tho93}, the author proves an equivariant self-intersection formula and Lefschetz-type localization formula in algebraic K-theory for torus actions. For later use, we collect some results. Let us denote $$\lambda_{-1}(E|_p)=\sum_{i=0}^{\mathrm{rank}\, E}(-1)^i\,[\wedge^i E|_p]$$ by the \emph{$K$-theoretic $\lambda$--class} of a $T$-equivariant locally free sheaf $E$ on $X$ at a fixed point $p\in X$. Denote by $R(T)$ the representation ring of $T$, which is canonically isomorphic to $R(\CC^*)\cong \ZZ[t^{\pm1}]$, where $t$ denotes the standard one-dimensional character of $\CC^*$. We define the \emph{$\CC^*$-equivariant Euler characteristic} (or \emph{pairing}) by
\[
\chi(E,F):=\sum_{i}(-1)^i [\Ext^i(E,F)]\in R(\CC^*)\]
for $E$ and $F$ are $T$-equivariant coherent sheaves on $X$. 
\begin{theorem}[Theorem 3.5 \protect{\cite{Tho92}}]\label{eqeuler}
Let $X$ be a smooth projective variety with a torus action $T$ and let
$F,G \in \Coh^T(X)$.
Assume that the fixed locus $X^T$ consists of isolated points.
Then for $i_p:\{p\}\hookrightarrow X$, we have
\[
\chi(F,G)
=
\sum_{p \in X^T}
\frac{i_p^*([F])^\vee \cdot i_p^*([G])}
{\lambda_{-1}(T_p^*X)}.
\]
\end{theorem}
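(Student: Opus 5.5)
This theorem is Thomason's localization formula in equivariant $K$-theory. The plan is to reduce $\chi(F,G)$ to a $K$-theoretic pushforward to a point and then apply the Lefschetz–Riemann–Roch localization theorem. First, since $X$ is smooth projective, every $T$-equivariant coherent sheaf has a finite resolution by $T$-equivariant locally free sheaves. Hence $[F],[G]$ lie in $K^T(X)=K_T^0(X)$, and
\[
\chi(F,G)=\chi\bigl(X,[F]^\vee\otimes[G]\bigr)=p_*\bigl([F]^\vee\otimes[G]\bigr)\in R(T),
\]
where $p:X\to\mathrm{pt}$ and $[F]^\vee$ is the derived dual. This works because $R\mathcal{H}om(F,G)\simeq F^\vee\otimes^L G$, and $\Ext^i(F,G)=H^i(X,R\mathcal{H}om(F,G))$ carries its natural $T$-module structure. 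Both sides are additive in $[F]$ and $[G]$, so it suffices to treat $F,G$ locally free.

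Next I apply Thomason's concentration theorem. Let $\iota:X^T\hookrightarrow X$ be the inclusion of the fixed locus. After inverting the multiplicative set generated by $1-t^{k}$, $k\neq 0$, the pushforward
\[
\iota_*:K^T(X^T)_{\mathrm{loc}}\to K^T(X)_{\mathrm{loc}}
\]
becomes an isomorphism. Its inverse is $\alpha\mapsto \iota^*\alpha/\lambda_{-1}(N^\vee)$, by the self-intersection formula $\iota^*\iota_*\beta=\beta\cdot\lambda_{-1}(N^\vee_{X^T/X})$. Since $X^T$ is a finite set of isolated points, the normal space at each $p$ is $T_pX$. Moreover, $\lambda_{-1}(T_p^*X)=\prod(1-w_j^{-1})$ over the tangent weights $w_j$, and no $w_j$ is trivial because $p$ is isolated, so this class is invertible in the localized ring. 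Writing $\alpha=\iota_*\bigl(\iota^*\alpha/\lambda_{-1}(N^\vee)\bigr)$ and pushing forward to a point, using $p_*\iota_*=\sum_p (p|_{\{p\}})_*$, which is the identity on $R(T)$ for each point, gives
\[
\chi(X,\alpha)=\sum_{p\in X^T}\frac{i_p^*\alpha}{\lambda_{-1}(T_p^*X)}.
\]
Set $\alpha=[F]^\vee[G]$ and use $i_p^*([F]^\vee)=(i_p^*[F])^\vee$; this gives the statement, initially in $R(T)_{\mathrm{loc}}$. The left-hand side already lies in $R(T)$, and $R(T)=\ZZ[t^{\pm1}]$ injects into its localization, so the identity holds as stated.

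The main obstacle is the concentration theorem itself, namely that $K^T(X\setminus X^T)_{\mathrm{loc}}=0$. I would cite Thomason for this. If it had to be proved, I would use a noetherian induction over $T$-stable strata on which $T$ acts through a finite-kernel quotient. On such strata, $K^T$ is a module over $R(T)/(1-t^k)$, so it is killed by the localization.

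The self-intersection formula holds because the Koszul resolution of $\cO_{X^T}$ is $T$-equivariant. For isolated points the argument is just the local Koszul complex on $T_pX$.
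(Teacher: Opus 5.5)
Your proposal is correct. The paper gives no proof of this statement; it quotes it from Thomason, and your argument is exactly the standard proof of the cited result. You rewrite $\chi(F,G)$ as the pushforward of $[F]^\vee\otimes[G]$ to a point, use the concentration theorem to write this class as a pushforward from $X^T$, identify the preimage with $\iota^*\alpha/\lambda_{-1}(N^\vee)$ via the equivariant self-intersection formula, and then push forward to a point.

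One small simplification: you only need surjectivity of $\iota_*$ after localization. That follows from the right-exact localization sequence together with $K^T(X\setminus X^T)_{\mathrm{loc}}=0$. The formula then follows directly from the self-intersection formula, because each $\lambda_{-1}(T_p^*X)$ is invertible in the localized ring.
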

We recall the \emph{(equivariant) self-intersection formula} in the $K$-theory.
\begin{theorem}[\protect{\cite[Theorem 3.1]{Tho93}}]\label{self}\label{selfin}
Let $i\colon C\hookrightarrow X$ be an embedding map such that $C$ is a locally complete intersection.
Let $p\in C^{T}$ be a $T$-fixed point. Then for any $F\in \Coh^{T}(C)$,
\[
i_p^*\bigl([i_*F]\bigr)= [F]_p\cdot \lambda_{-1}\!\bigl(N^*_{C/X, p}\bigr)
\in K_T(\mathrm{pt}).
\]
Here $[F]_p= [F \otimes^{\mathbf L}_{\cO_C} \CC_p] \in K_T(\mathrm{pt})$ denotes the derived fiber of $F$ at the point $p$.
\end{theorem}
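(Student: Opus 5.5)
The plan is to factor the inclusion of the fixed point through $C$ and reduce the formula to a statement on $C$ itself. Write $i_p = i\circ j_p$, where $j_p\colon\{p\}\hookrightarrow C$. Since derived pullback is functorial, $i_p^*[i_*F] = j_p^*\bigl[Li^*\, i_*F\bigr]$ in $K_T(\mathrm{pt})$. Here $j_p^*$ on $K_T(C)$ is the derived fiber $[-]_p$, which is well defined because $p$ is a $T$-fixed point. So it suffices to prove the following identity in $K_T(C)$:
\[
[Li^*\, i_*F] = [F]\cdot \lambda_{-1}\bigl(N^*_{C/X}\bigr).
\]
Granting it, we apply $j_p^*$. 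This map is a ring homomorphism on classes of locally free sheaves and is compatible with the $K_T(C)$-module structure, i.e. $[F\otimes E]_p=[F]_p\cdot[E|_p]$ for $E$ locally free. Also $\lambda_{-1}$ commutes with restriction to fibers. Together these give the claimed formula.

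To prove the identity on $C$, I would compute the cohomology sheaves $\mathcal{T}or_k^{\cO_X}(i_*F,\cO_C)$. Because $C$ is a local complete intersection, the ideal sheaf $\cI$ satisfies two facts: $\cI/\cI^2 = N^*_{C/X}$ is locally free, and locally $\cO_C$ has a Koszul resolution $\wedge^\bullet \cO_X^{\oplus r}\to \cO_C$ built from a regular sequence $f_1,\dots,f_r$ generating $\cI$. Tensor this Koszul complex with $i_*F$. Every $f_j$ kills $F$, so all differentials vanish, and hence
\[
\mathcal{T}or_k^{\cO_X}(i_*F,\cO_C)\cong F\otimes_{\cO_C}\wedge^k(\cI/\cI^2)=F\otimes \wedge^k N^*_{C/X}.
\]
This holds for arbitrary coherent $F$, with no flatness needed. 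Summing with signs in $K_T(C)$ then gives $\sum_k(-1)^k[F\otimes\wedge^kN^*]=[F]\cdot\lambda_{-1}(N^*)$.

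The main obstacle is globalization and equivariance. The isomorphism above is produced from a local choice of regular sequence, so I must check that it does not depend on that choice. The standard route is to show that the map $\wedge^k(\cI/\cI^2)\otimes F\to \mathcal{T}or_k(i_*F,\cO_C)$ is canonical. It is induced by the Tor-algebra multiplication together with $\mathcal{T}or_1(\cO_C,\cO_C)=\cI/\cI^2$, and it is compatible with changes of generators. Once it is canonical, the local isomorphisms glue. Since the $T$-action preserves $C$, $\cI$ and $F$, the canonical map is automatically $T$-equivariant. This gives the equality of classes in $K_T(C)$, with the characters of $N^*_{C/X,p}$ entering through $\lambda_{-1}$. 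Finiteness of the alternating sum is automatic, because the Tor sheaves vanish for $k>\operatorname{codim}(C,X)$.
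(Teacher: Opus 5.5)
Your proof is correct in substance, and it takes a different route from the paper. The paper gives no proof of this statement. It quotes Thomason, where the result is (a special case of) the equivariant self-intersection/excess-intersection formula $i^*i_*x=\lambda_{-1}(N^\vee_{C/X})\cdot x$ in the $K$-theory of equivariant perfect complexes on $C$; the displayed pointwise identity is that formula pulled back to $p$.

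You instead prove the global identity from scratch. You compute the cohomology sheaves of $Li^*i_*F$ with local Koszul resolutions and obtain $\mathcal{T}or_k^{\cO_X}(i_*F,\cO_C)\cong F\otimes\wedge^k N^*_{C/X}$. You then glue using the canonical, hence $T$-equivariant, Tor-algebra map $F\otimes\wedge^k\mathcal{T}or_1^{\cO_X}(\cO_C,\cO_C)\to\mathcal{T}or_k^{\cO_X}(i_*F,\cO_C)$.

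What each route buys:
\begin{itemize}
\item Your argument is elementary. It identifies the Tor sheaves themselves, not just their classes, and it holds for every coherent $F$ in the Grothendieck group of equivariant coherent sheaves on $C$.
\item The citation buys generality (arbitrary perfect classes, excess bundles) and spares you the gluing check.
\item For the pointwise statement alone you could avoid gluing altogether. Work on a $T$-stable affine neighborhood of $p$ with weight-homogeneous generators of $\cI$; their Koszul complex is $T$-equivariant and gives the derived fiber at $p$ directly.
\end{itemize}

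One justification needs correcting. The map $j_p^*$ is not well defined on the Grothendieck group of coherent sheaves on $C$ merely because $p$ is $T$-fixed. If $C$ is singular at $p$ (e.g.\ the node of $L\cup Q$ at $p_{10}$), then $\CC_p$ has infinite Tor-dimension over $\cO_{C,p}$. The derived fiber of a general coherent sheaf is then unbounded, so $[F]_p$ is not an element of $K_T(\mathrm{pt})$.

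The statement tacitly assumes that $F$ has finite Tor-dimension at $p$. This is automatic when $C$ is smooth at $p$, which covers every use in the paper. Under that assumption, your argument closes once you apply $Lj_p^*$ not to a class in $K_T(C)$ but to the canonical truncation triangles of $Li^*i_*F$. Each graded piece $F\otimes\wedge^kN^*_{C/X}[k]$ has bounded derived fiber $(F\otimes^{\mathbf L}_{\cO_C}\CC_p)\otimes\wedge^kN^*_{C/X,p}[k]$, and additivity in $K_T(\mathrm{pt})$ gives the formula. Equivalently, restrict to the $T$-stable open locus where $F$ is perfect; there $j_p^*$ is a genuine ring homomorphism and your identity holds among perfect classes.
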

\begin{remark}
In Theorem \ref{selfin}, if $F$ is locally free on $C$ around the closed point $p$, then $[F]_p$ coincides with the class
of the $T$-representation on the usual fiber $F|_p$.
\end{remark}
\begin{corollary}\label{freechi}
\label{lflfconc}
Suppose that $X$ is smooth and that the fixed point locus $X^{T}$ is isolated.
Then for any $\mathbb C^{*}$-equivariant locally free sheaf $E$ on $X$, we have
\[
\chi(E)
=
\sum_{p\in X^{T}}
\frac{[E]_p}{\lambda_{-1}(T_p^*X)}.
\]
\end{corollary}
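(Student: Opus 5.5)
The corollary follows from Theorem \ref{eqeuler} by taking $F=\cO_X$ and $G=E$. For this choice,
\[
\chi(\cO_X,E)=\sum_i(-1)^i[\Ext^i(\cO_X,E)]=\sum_i(-1)^i[\rH^i(X,E)]=\chi(E)\in R(\CC^*).
\]
The $T$-action on the Ext groups agrees with the action on cohomology, because $\cO_X$ carries its canonical (trivial) linearization. So the equivariant Euler characteristic of $E$ is the pairing on the left-hand side of Theorem \ref{eqeuler}.

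Next we compute the two restrictions at a fixed point $p\in X^T$. Since $\cO_X$ and $E$ are locally free, the derived pullback $i_p^*$ agrees with the ordinary fiber, which is a $T$-representation because $p$ is fixed. Thus $i_p^*[\cO_X]=[\CC]=1$ carries trivial weight, so its dual is again $1$. Likewise $i_p^*[E]=[E|_p]=[E]_p$; the remark following Theorem \ref{selfin} identifies this with the class of the representation on the fiber. Substituting into Theorem \ref{eqeuler} gives
\[
\chi(E)=\sum_{p\in X^T}\frac{1\cdot [E]_p}{\lambda_{-1}(T_p^*X)},
\]
which is the claimed formula.

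Two points need a brief check. The first is the hypotheses of Theorem \ref{eqeuler}: we need $X$ smooth projective with isolated fixed points. This matches the setting of the corollary, where properness is implicit so that $\chi$ is defined. The second is the meaning of the quotient, which requires $\lambda_{-1}(T_p^*X)$ to be invertible. Isolatedness of $p$ means $T_pX$ has no zero-weight summands, so $\lambda_{-1}(T_p^*X)=\prod_j(1-t^{-w_j})$ with all $w_j\neq 0$. This is invertible in the localized ring $R(T)$ used by Thomason. The right-hand side is a priori a rational function, and the identity says it actually lies in $\ZZ[t^{\pm1}]$.

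The main, and only mild, obstacle is bookkeeping of conventions: whether the pairing places the dual on $F$ or on $G$. With $F=\cO_X$ either convention gives the same answer, since the dual of $1$ is $1$. Alternatively, one could derive the corollary from Theorem \ref{selfin}, but the direct specialization above is cleanest.
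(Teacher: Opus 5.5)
Your proposal is correct and matches the paper's proof: the paper also specializes Theorem~\ref{eqeuler} to $F=\cO_X$, $G=E$, and uses Theorem~\ref{selfin} (in the trivial case where $i_p^*$ of a locally free sheaf is its fiber) to get $i_p^*[\cO_X]=1$ and $i_p^*[E]=[E]_p$. Your extra remarks on the invertibility of $\lambda_{-1}(T_p^*X)$ and on the duality convention are correct, and they spell out details the paper leaves implicit.
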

\begin{proof}
Let $F=\cO_X$ and $G=E$. Then by Theorem \ref{eqeuler} and Theorem \ref{selfin}, we get the result.
\end{proof}
\begin{corollary}
\label{smcurvechi}
Let $X$ be a smooth projective variety with a $T$-action, and let
$i \colon C \hookrightarrow X$ be a smooth $T$-invariant curve.
Then
\begin{equation}
\chi(i_*\cO_C, i_*\cO_C)
=\sum_{i=0}^{\dim X-1}(-1)^i\chi(C,\, \wedge^i N_{C/X}),
\end{equation}
where $\wedge^0 N_{C/X}=\cO_C$.
\end{corollary}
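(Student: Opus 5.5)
We would prove Corollary \ref{smcurvechi} by reducing $\chi(i_*\cO_C,i_*\cO_C)$ to a computation on $C$ itself, using the local-to-global Ext spectral sequence together with the standard description of the local Ext sheaves of a structure sheaf of a smooth subvariety. Set $r=\dim X-1$, the codimension of $C$. Since $C$ is smooth, hence a local complete intersection in the smooth variety $X$, the Koszul resolution of $\cO_C$ gives canonical isomorphisms
\[
\mathcal{E}xt^q_{\cO_X}(i_*\cO_C,i_*\cO_C)\cong i_*\wedge^q N_{C/X},\qquad 0\le q\le r,
\]
and these vanish for $q>r$. The isomorphism is intrinsic: locally the differentials of the Koszul complex vanish after applying $\mathcal{H}om(-,\cO_C)$. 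It is therefore compatible with the $T$-action, so it holds in $\Coh^T(C)$ and not merely non-equivariantly.

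Next we use the local-to-global spectral sequence $E_2^{p,q}=H^p(X,\mathcal{E}xt^q(i_*\cO_C,i_*\cO_C))\Rightarrow \Ext^{p+q}(i_*\cO_C,i_*\cO_C)$, which is $T$-equivariant. Euler characteristics in $R(\CC^*)$ are additive along spectral sequences with finite-dimensional terms, since each page is the cohomology of the previous page. Hence
\[
\chi(i_*\cO_C,i_*\cO_C)=\sum_{p,q}(-1)^{p+q}[E_2^{p,q}]=\sum_{q=0}^{r}(-1)^q\sum_p(-1)^p[H^p(C,\wedge^qN_{C/X})]=\sum_{q=0}^{r}(-1)^q\chi(C,\wedge^qN_{C/X}),
\]
where we used $H^p(X,i_*G)=H^p(C,G)$. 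This is the claimed formula.

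Alternatively, staying within the localization framework just set up, we could apply Theorem \ref{eqeuler} with $F=G=i_*\cO_C$ and Theorem \ref{selfin}. At a fixed point $p\in C$ this gives $i_p^*[i_*\cO_C]=\lambda_{-1}(N^*_{C/X,p})$. We would then use the splitting $T_p^*X=T_p^*C\oplus N^*_{C/X,p}$ together with the identity $\lambda_{-1}(N^\vee)^\vee=(-1)^r\det N\,\lambda_{-1}(N^*)$, and finally apply Corollary \ref{freechi} on $C$ to reassemble the sum. This route requires $C^T$ to be finite, which is where the hypothesis on fixed points would enter. The first route avoids that assumption.

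The only delicate point is the equivariance of the identification $\mathcal{E}xt^q\cong\wedge^qN_{C/X}$, including the absence of a twist by a character. We expect this to be the main obstacle. To settle it, we would check it against the canonical isomorphism $\mathcal{E}xt^1(\cO_C,\cO_C)\cong N_{C/X}$ and then take exterior powers, which is the standard multiplicative structure on local Ext.
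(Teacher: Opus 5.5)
Your proof is correct, but its main line differs from the paper's. The paper's terse proof is essentially your alternative route. It applies Theorem~\ref{eqeuler} with $F=G=i_*\cO_C$ and uses Theorem~\ref{selfin} to get $i_p^*[i_*\cO_C]=\lambda_{-1}(N^*_{C/X,p})$ at each fixed $p\in C$. It then factors $\lambda_{-1}(T_p^*X)=\lambda_{-1}(T_p^*C)\,\lambda_{-1}(N^*_{C/X,p})$ and applies Corollary~\ref{freechi} on $C$ to reassemble $\sum_p\lambda_{-1}(N_{C/X,p})/\lambda_{-1}(T_p^*C)$. On that route you do not need the $\det N$ identity, since $\lambda_{-1}(N_p^*)^\vee=\lambda_{-1}(N_p)=\sum_q(-1)^q\wedge^qN_p$ already.

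Your local-to-global spectral sequence argument uses $\mathcal{E}xt^q(i_*\cO_C,i_*\cO_C)\cong i_*\wedge^qN_{C/X}$, made equivariant via the canonical isomorphism $\mathcal{E}xt^1(\cO_C,\cO_C)\cong\mathcal{H}om(\cI_C/\cI_C^2,\cO_C)$ and the Yoneda product. This is the more general proof, for three reasons. First, it needs no assumption on fixed points; the paper's proof tacitly needs $X^T$ finite, because Theorem~\ref{eqeuler} and Corollary~\ref{freechi} require it, although the corollary as stated omits this. Second, it applies verbatim to smooth invariant subvarieties of any dimension. Third, its $E_2$-terms $H^p(C,\wedge^qN_{C/X})$ control $\Ext^1$ and $\Ext^2$ separately, not just their virtual difference. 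That is the kind of information the paper relies on when it labels deformation and obstruction weights in Table~\ref{tab:rateulr}.

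The equivariance point you flag is settled exactly as you propose, since these identifications are natural in the pair $(X,C)$. The linearization of $\cO_C$ also plays no role, because twisting $\cO_C$ by a character does not change $\Ext^*(\cO_C,\cO_C)$.

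The paper's route, in turn, stays inside its $K$-theoretic localization toolkit. It directly produces the fixed-point sums that are then evaluated in Lemma~\ref{d1l}.
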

\begin{proof}
Applying $F=G=i_*\cO_C$ in Theorem \ref{eqeuler}, a direct calculation provides the result by Theorem \ref{selfin} and Corollary \ref{freechi}.
\end{proof}
\subsection{Geometric data of the variety $X$}\label{subge}
Let us recall the definition of the Mukai-Umemura variety $X$. Let $\cS$ be the universal bundles of Grassmannian $\Gr(3,7)$. Then $X$ is defined as a zero locus of a regular section of $(\wedge^2\cS^*)^{\oplus3}$. Furthermore, 
\begin{itemize}
\item The cohomology ring of $X$ over $\ZZ$ is isomorphic to
\[
\rH^{*}(X,\ZZ)\cong\ZZ[h_1,h_2,h_3]/\langle h_1^2-22h_2,h_1^3-22h_3,h_1h_2-h_3,h_2^2\rangle
\]
where $\deg(h_i)=2i, 1\leq i\leq 3$. Moreover, $h_1=c_1(\cO_{X}(1))$ and $h_i$ is the Poincar\'e dual of the linear space of dimension $3-i$ for $i=2,3$.
\item The Chern classes of the tangent bundle of $X$ is  $c(T_{X})=1+h_1+24h_2+4h_3$.
\item $\mathrm{deg}(X)=22$, $\mathrm{Pic}_{\ZZ}(X)\cong \ZZ\langle H\rangle$, and $K_{X}=-H$ for the hyperplane divisor $H$ of $X$.
\end{itemize}
\subsection{Local charts of the variety $X$ and its weights}
At the beginning part of Section $4$ in\cite{CKK25}, the local charts of the variety $X$ were studied under the existence of an $\SL_2:=\SL(2, \CC)$-action. We now summarize the results that will be used in the sequel. Since we will consider the action of the maximal torus subgroup ($\{\mathrm{diag}(t^{-1}, t)\mid t\in \CC^*\}\cong \CC^*$) of $\SL_2$, let us denote the vector space $U_6=\langle e_6, e_4, e_2, e_0, e_{-2}, e_{-4}, e_{-6}\rangle$ with weights $e_k=-k$ for $k\in \{\pm6, \pm 4, \pm 2, 0\}$. A computation of torus weights shows that $X$ has four fixed points:
\[
p_{12} := \PP(e_6 \wedge e_4 \wedge e_2),\;
p_{10} := \PP(e_6 \wedge e_4 \wedge e_0),\; p_{-10} := \PP(e_0 \wedge e_{-4} \wedge e_{-6}),\;
p_{-12} := \PP(e_{-2} \wedge e_{-4} \wedge e_{-6})
\]
under the Pl\"ucker embedding into $\PP(\wedge^3 U_6)$ (\cite[Section 5.2]{Don08}). Let us describe the $\CC^*$-invariant local coordinate charts of $X$ around these fixed points. Let $V_m^\circ$ denote the standard Schubert cell in Grassmannian $\Gr(3,U_6)$, and set
\[
V_m := X \cap V_m^\circ.
\]
The standard affine chart $V_{12}^\circ$ of $\Gr(3,U_6)$ around the fixed point $p_{12}$ is parametrized by
\[
\begin{aligned}
P &=\mathrm{span}\{e_6 + a_1 e_0 + a_2 e_{-2} + a_3 e_{-4} + a_4 e_{-6}, e_4 + a_5 e_0 + a_6 e_{-2} + a_7 e_{-4} + a_8 e_{-6}, \\
&e_2 + a_9 e_0 + a_{10} e_{-2} + a_{11} e_{-4} + a_{12} e_{-6}\}.
\end{aligned}
\]
The variety $X$ is defined by the quadratic condition $\eta (\wedge^2 P)=0$, where $\eta$ is the net of alternating forms on $U_6$ (\cite[Remark 3.12]{CKK25}).
Hence the affine chart $V_{12}$ is cut out by the following nine equations:
\begin{equation}\label{eq:u12}
\begin{aligned}
\langle 5a_2 + 3a_7,
a_3 + 9a_8, 
2a_2a_5 - 2a_1a_6 + \tfrac{1}{5}a_4&, 
10a_1 - a_{11},5a_2 - 9a_{12},\\
6a_2a_9 - 6a_1a_{10} - a_3,
a_5 + a_{10}, 
5a_6 + a_{11},
&2a_6a_9 - 2a_5a_{10} - \tfrac{1}{3}a_7 - \tfrac{1}{5}a_{12}\rangle.
\end{aligned}
\end{equation}
From these relations in \eqref{eq:u12}, one may choose $a_1, a_5, a_9$ as affine coordinates on $V_{12}$. Also by symmetry of weights up to sign, we may take $d_1, d_5, d_9$ as coordinates on $V_{-12}$. Note that both of these two charts $V_{12}$ and $V_{-12}$ are isomorphic to the affine space $\CC^3$ (cf. \cite[Section 5]{Fur90}). Similarly around the fixed point $p_{10}$, the affine chart $V_{10}^{\circ}$ of $\Gr(3, U_6)$ is parameterized by
\[
\begin{split}
Q&=\mathrm{span}\{ e_6+b_1e_2+b_2e_{-2}+b_3e_{-4}+b_4e_{-6}, e_4+b_5e_2+b_6e_{-2}+b_7e_{-4}+b_8e_{-6},\\
&b_9e_2+e_0+b_{10}e_{-2}+b_{11}e_{-4}+b_{12}e_{-6}\}
\end{split}
\]
and thus $V_{10}$ is defined as the nine equations:
\begin{equation}\label{bicoo}
\begin{aligned}
\langle -b_2-\frac{3}{5}b_7,b_2b_5-b_1b_6-\frac{1}{5}b_3-\frac{9}{5}b_8, -b_3b_5+b_1b_7+\frac{3}{5}b_4,-6b_1-\frac{3}{5}b_{11}, \;&b_2b_9-b_1b_{10}-\frac{9}{5}b_{12},\\
-b_3b_9+b_1b_{11}+6b_2, -6b_5+b_{10}, b_6b_9-b_5b_{10}+\frac{1}{5}b_{11}&, -b_7b_9+b_5b_{11}+6b_6-\frac{3}{5}b_{12}\rangle,
\end{aligned}
\end{equation}
which follows from the quadratic relation $\eta (\wedge^2 Q)=0$.
From the relations in \eqref{bicoo}, we know that the chart $V_{10}$ around the fixed point $p_{10}$ may have coordinates $b_5, b_8, b_9$. By the symmetry of the weights up to sign, we may take the coordinates of $V_{-10}$ as $c_5, c_8, c_9$.  Note that the open chart $V_{10}$ (resp. $V_{-10}$) is isomorphic to $V_{10}\cong \CC_{(b_5, b_8, b_9)}^3\setminus \{3b_5b_9-2=0\}$ (resp. $V_{-10}\cong \CC_{(c_5, c_8, c_9)}^3\setminus \{3c_5c_9-2=0\}$). In summary, the weights of the coordinates ${a_i}$, ${b_i}$, ${c_i}$, and ${d_i}$ are listed in Table \ref{tab:fixedx}.
Let us denote the \emph{weights} of the coordinate $-$ of $V_i$ by $\mathrm{wt}(-)$.
\begin{table}[htbp]
\centering
\begin{tabular}{|c|c|c|c|}
\hline
The local charts $V_{i}$ around the fixed points & Coordinates of $V_{i}$& Weights of coordinates \\ \hline\hline
$p_{12}\in V_{12}$ & $\{a_1, a_5, a_9\}$ & $\mathrm{wt}(a_1)=-6, \mathrm{wt}(a_5)=-4, \mathrm{wt}(a_9)=-2$ \\ \hline
$p_{10}\in V_{10}$ & $\{b_5, b_8, b_9\}$ & $\mathrm{wt}(b_5)=-2, \mathrm{wt}(b_8)=-10, \mathrm{wt}(b_9)=2$ \\ \hline
$p_{-10}\in V_{-10}$ & $\{c_5, c_8, c_9\}$ & $\mathrm{wt}(c_5)=2, \mathrm{wt}(c_8)=10, \mathrm{wt}(c_9)=-2$ \\ \hline
$p_{-12}\in V_{-12}$ & $\{d_1, d_5, d_9\}$ & $\mathrm{wt}(d_1)=6, \mathrm{wt}(d_5)=4, \mathrm{wt}(d_9)=2$ \\ \hline
\end{tabular}
\caption{Local coordinates of $X$ and its weights}
\label{tab:fixedx}
\end{table}
\begin{remark}
Let $\bG=\Gr(3,U_6)$ be the Grassmannian with the universal subbundle $\cS$. 
From the tangent bundle exact sequence
\begin{equation}\label{eq:tang}
\ses{T_{X}}{T_{\bG}|_{X}}{N_{X/\bG}\cong (\wedge^2\cS^*)^{\oplus3}},
\end{equation}
one can prove that the $\CC^*$-weights of the tangent space $T_{p_i}X$ at a fixed point $p_i$, 
$i\in\{\pm12,\pm10\}$ coincide with those obtained from the local chart $V_i$ of $X$ at the origin. For instance, the point $p_{10}$ corresponds to the subspace 
$U_3=\langle e_6,e_4,e_0\rangle\subset U_6$. 
The tangent space of $\bG$ at the point $p_{10}$ is identified with
\[
T_{\bG,p_{10}}
\cong \Hom(U_3,U_6/U_3)
\cong U_3^*\otimes (U_6/U_3).
\]
Moreover, the normal space of $X$ in $\bG$ at the point $p_{10}$ is given by
\[
N_{X/\bG,p_{10}}
\cong (\wedge^2 U_3^*)\otimes N,
\]
where the $\CC^*$-weights of $N$ are $\pm2$ and $0$ 
(\cite[Proposition 3.11]{CKK25}). Combining these descriptions with the exact sequence~\eqref{eq:tang}, 
we deduce that the $\CC^*$-weights of the tangent space $T_{X,p_{10}}$ 
are $\pm2, 10$. These agree with the weights computed from the local coordinates 
on $V_{10}$ at the origin, namely
\[
\mathrm{wt}\!\left(\frac{\partial}{\partial b_5}\right)=2,\,
\mathrm{wt}\!\left(\frac{\partial}{\partial b_8}\right)=10,\,
\mathrm{wt}\!\left(\frac{\partial}{\partial b_9}\right)=-2.\,
\]

\end{remark}
\subsection{Torus fixed curves and its local equations}
In \cite{CKK25}, the authors classify all $\mathbb{C}^*$-fixed curves of low degree using Kuznetsov's flop diagram (\cite[Theorem 1]{Kuz97}).
\begin{lemma}[\protect{\cite[Proposition 4.9 and Remark 4.10]{CKK25}}]\label{fixra}
The $\CC^*$-fixed, irreducible rational curves of degree $d\leq4$ in $X$ consist of
\begin{itemize}
\item the fixed lines $L_{\pm2}: \{e_{\pm6} \wedge e_{\pm4}\wedge (\alpha e_{\pm2}+\beta e_0)\}$,
\item the smooth conic $Q: \{e_0\wedge(\alpha e_{6}-9\beta e_{-4})\wedge (\alpha e_4+\beta e_{-6})\}$, and
\item the smooth quartic curve $C_4: \{(4\alpha^2 e_6+ \alpha \beta e_0-5\beta^2 e_{-6})\wedge (2\alpha e_4-\beta e_{-2})\wedge(2\alpha e_{2}+5  \beta e_{-4})\}$
\end{itemize}
for $[\alpha:\beta]\in \PP^1$. Furthermore, the local defining ideals of these curves are listed in TABLE \ref{ta:fixedcurve}.
\end{lemma}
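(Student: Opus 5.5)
The strategy is to check directly that each listed curve is $\CC^*$-invariant and has the stated degree, and then to show that there are no others. The completeness argument uses the torus action on the Hilbert scheme of low-degree rational curves, together with the fact that every $\CC^*$-invariant irreducible curve passes through the fixed points $p_{\pm12}, p_{\pm10}$.

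\textbf{Verification.} For each parametrized family, I will first check that the $3$-space lies in $X$, i.e.\ that $\eta$ vanishes on $\wedge^2$ of the span. Using the equations \eqref{eq:u12} and \eqref{bicoo}, it suffices to write the family in the affine chart containing its endpoint and substitute.

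\emph{Line $L_2$.} Put $\beta=t\alpha$. In the chart $V_{12}$ this gives $a_9=t$ with all other $a_i=0$. This satisfies the nine equations, so $L_2$ is the $a_9$-axis, joining $p_{12}$ to $p_{10}$ (at $\alpha=0$ the span is $e_6\wedge e_4\wedge e_0$). It is torus-invariant, since the torus acts on $[\alpha:\beta]$ with weights $-2,0$. Under the Plücker map the coordinates are linear in $(\alpha,\beta)$, so the degree is $1$. The line $L_{-2}$ follows by the sign symmetry of weights.

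\emph{Conic $Q$.} The Plücker coordinates are quadratic in $(\alpha,\beta)$, and the curve joins $p_{10}$ ($\beta=0$: $e_0\wedge e_6\wedge e_4$) to $p_{-10}$. I check membership in $X$ in the chart $V_{10}$: set $\beta/\alpha=s$, row-reduce to the form $Q$, read off $b_8\propto s^2$ together with the dependent coordinates, and verify \eqref{bicoo}.

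\emph{Quartic $C_4$.} The Plücker coordinates have degree $2+1+1=4$, and the curve joins $p_{12}$ to $p_{-12}$. I check it in $V_{12}$ after normalizing the basis.

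The local ideals in Table \ref{ta:fixedcurve} are obtained by eliminating the parameter in each chart. For example, $L_2\cap V_{12}=\{a_1=a_5=0\}$, and $Q\cap V_{10}=\{b_5=b_9=0\}$, namely the $b_8$-axis.

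\textbf{Completeness.} Let $C$ be an irreducible invariant curve that is not pointwise fixed. Since $X^T$ is finite, $C$ is a torus-orbit closure, hence rational with exactly two fixed points $p,q$. Its tangent direction at $p$ is a weight line of $T_pX$, and the curve is determined by the weight of its coordinate parameter. From Table \ref{tab:fixedx}:
\begin{itemize}
\item At $p_{12}$, orbits are $t\mapsto$ (curves in the $a$-coordinates) with degrees governed by the weights $2,4,6$. The pure axis directions give $L_2$ (weight $2$), and the weight-$6$ direction gives $C_4$. A general orbit closure $t\mapsto (c_1t^3,c_2t^2,c_3t)$ is not of low degree unless it coincides with one of these.
\end{itemize}
To make this rigorous, I will use the $\SL_2$-equivariance and Kuznetsov's flop diagram, as in \cite{CKK25}. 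Lines on $X$ are parametrized by a $\PP^1$ (the conic in $\PP^{12}$ given by the $\SL_2$-orbit), so the fixed lines are exactly the two fixed points of that $\PP^1$. Conics are parametrized by $\PP^2=\PP(\mathrm{Sym}^2)$, whose fixed points are the weight vectors; these give $Q$ and the double lines, which are non-reduced. The rational quartics are handled similarly through the flop. For each fixed point of these Hilbert schemes, I will identify the corresponding curve and discard the non-reduced or reducible ones.

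I expect the main obstacle to be this last step for degree $3$ and $4$: showing that no other invariant irreducible twisted cubic or quartic exists. My plan is a degree count using the Plücker weights. An orbit closure joining $p$ and $q$ has degree $(w(p)-w(q))/w_{\mathrm{tangent}}$, measured by the Plücker weight of $\mathcal O(1)$; the Plücker weights are $12,10,-10,-12$. The possible pairs give $22/2=11$, $24/6=4$, $20/10=2$, $2/2=1$, and so on. This leaves exactly the listed degree $1$, $2$ and $4$ curves, and no cubic.
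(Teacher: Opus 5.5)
Your argument is correct in substance, but it takes a different route from the paper. The paper gives no proof of its own: it imports the classification from \cite{CKK25}, where the invariant curves are obtained as $T$-fixed points of the Hilbert schemes of low-degree rational curves on $X$, described $\SL_2$-equivariantly through Kuznetsov's flop diagram. Your Hilbert-scheme remarks (lines via the $\PP^1$, conics via $\PP(\mathrm{Sym}^2)$, quartics ``similarly through the flop'') only sketch that route. Your actual completeness argument is the weight count: an irreducible invariant curve is an orbit closure with two fixed endpoints $p,q$, and pulling $\cO_X(1)$ back to the normalization $\PP^1$ gives $w(p)-w(q)=g\cdot\deg C$. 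This needs only Table \ref{tab:fixedx} and the $\cO_X(1)$-weights $\pm12,\pm10$. It is self-contained and rules out invariant irreducible cubics in one line. In exchange, the Hilbert-scheme route also produces the non-reduced and reducible fixed curves ($D_{\pm k}$, $L^2_{\pm2}\cup Q$, \dots) that the paper later needs for $\bM_4^T$; your count does not see those.

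Three things should be stated correctly for the count to be a proof.
\begin{itemize}
\item \emph{The denominator.} $g$ must be the tangent weight of the normalization, i.e.\ the gcd of the weights of the nonzero coordinates of an orbit point in the linear chart at $p$. It is not the weight of the tangent direction in $T_pX$. For example, the orbit of $(a_1,a_5,a_9)=(1,1,0)$ is cuspidal, with tangent cone of weight $4$ but $g=2$. Since $g$ divides the tangent-cone weight, your version only underestimates degrees, so your exclusions survive. Using $g$ directly gives $2/g=1$, $22/g=11$, $24/g\in\{12,6,4\}$ and $20/g\in\{10,2\}$.
\item \emph{Uniqueness.} ``Exactly'' needs uniqueness, and your sentence that the curve is determined by the weight of its parameter is false in general: all orbits at $p_{12}$ except the $a_1$- and $a_5$-axes, a two-parameter family, have $g=2$. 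It does hold where you need it. At $p_{12}$, $g=6$ forces the $a_1$-axis. In the downward cell $\{b_9=0\}\subset V_{10}$, $g=10$ forces the $b_8$-axis. For lines you can even bypass the Fano scheme: the negative-weight part of $T_{p_{10}}X$ is the $b_9$-direction alone, so the only invariant curve joining $p_{10}$ to $p_{12}$ is the $b_9$-axis, i.e.\ $L_2$.
\item \emph{A small slip.} On $Q$ one has $b_8=\beta/\alpha$ and $b_3=-9b_8$, which is linear in $s$, not $b_8\propto s^2$.
\end{itemize}
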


\renewcommand{\arraystretch}{1.35}
\begin{table}[ht]
\centering
\begin{tabular}{|c|c|c|}
\hline
Fixed curves $C$ & Local charts $V_i$ of $X$ & Defining ideals of $C$ in $V_i$ \\
\hline
\multirow{2}{*}{$L_2$}
& $V_{12}$ &  $\langle a_1, a_5\rangle$
\\ \cline{2-3}
& $V_{10}$ &  $\langle b_5, b_8\rangle$
\\ \hline
\multirow{2}{*}{$L_{-2}$}
& $V_{-12}$ & $\langle d_1, d_5\rangle$
\\ \cline{2-3}
& $V_{-10}$ & $\langle c_5, c_8\rangle$
\\ \hline
\multirow{2}{*}{$Q$}
& $V_{10}$ & $\langle b_5, b_9\rangle$
\\ \cline{2-3}
& $V_{-10}$ & $\langle c_5, c_9\rangle$
\\ \hline
\multirow{2}{*}{$C_4$}
& $V_{12}$ & $\langle a_5, a_9\rangle$
\\ \cline{2-3}
& $V_{-12}$ & $\langle d_5, d_9\rangle$
\\ \hline
\end{tabular}
\caption{Fixed curves and their defining equations}
\label{ta:fixedcurve}
\end{table}
From now on, we denote by $L=L_2$ the first line in TABLE~\ref{ta:fixedcurve}.
\section{Computation of DT-invariants}
In this section, we will compute the integration through the localization formulas when $Y$ is a local CY $4$-fold over $X$.
\subsection{Computation of the integration via localization formulas}
Let $X$ be a smooth Fano $3$-fold. Let $Y = \mathrm{Tot}(K_X)$ be the total space of the canonical line bundle $K_X$ with the canonical projection map $\pi : Y \to X$. In this case, by \cite[Theorem 6.5]{CL14}, \[\bM_{\beta}(X)\cong \bM_{\pi^*\beta}(Y), F\mapsto i_*F\] for the zero section $i:X\hookrightarrow Y=\mathrm{Tot}(K_X)$. Also, by Grothendieck-Riemann-Roch theorem, the insertions become
\[
\tau_{i}(\gamma)=\pi_{\bM*}\left(\pi_X^*\gamma\cup \{\mathrm{ch}(\cF)\cdot \text{td}(K_X)^{-1}\}_{i+2}\right)
\]
and thus the integration in \eqref{intinse} is defined against on the virtual class $[\bM_{\beta}(X)]^{\mathrm{vir}}$ (\cite[Corollary 3.39]{Tho00}). In this setting, we formulate a localization formula for integrals with insertions.
\begin{proposition}\label{lcfinse}
Let us assume that the fixed locus $\bM^T:=\bM_{\beta}(X)^T$ is isolated. Then for $\gamma\in \rH_T^{4-2i}(X,\ZZ)$, the integration of the insertion is
\begin{equation}\label{lccom}
\langle \tau_i(\gamma)\rangle_{d}=\sum_{[F]\in \bM^T}\left(\sum_{p\in \mathrm{supp}(F)^T} \frac{\gamma|_p\cdot\{\mathrm{ch}^T(F|_p)\cdot \mathrm{td}^T(K_{X}|_p)^{-1}\}_{i+2}}{e^T(T_{X}|_p)}\right) e^T\left(\chi(F, F)-1\right).
\end{equation}
\end{proposition}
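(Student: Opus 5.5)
We combine virtual localization (Theorem \ref{virloc}) on $\bM=\bM_\beta(X)$ with Atiyah--Bott localization (Remark \ref{smvir}) on $X$ to evaluate the insertion at each fixed sheaf. Since $\bM^T$ is isolated, Theorem \ref{virloc} gives
\[
\langle\tau_i(\gamma)\rangle_d=\sum_{[F]\in\bM^T}\frac{\tau_i(\gamma)|_{[F]}}{e^T(T^{\mathrm{vir}}_{[F]}\bM)}.
\]

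We first identify the denominator. For a stable sheaf $F$ on $X$, the obstruction theory of $\bM_\beta(X)$ (the one inducing the class used via \cite[Corollary 3.39]{Tho00}) has tangent space $\Ext^1(F,F)$ and obstruction space $\Ext^2(F,F)$. Stability gives $\Hom(F,F)=\CC$ with weight zero. Serre duality gives $\Ext^3(F,F)\cong\Hom(F,F\otimes K_X)^\vee$, which vanishes because $F$ is stable, one-dimensional, and $K_X$ is anti-ample. Hence
\[
T^{\mathrm{vir}}_{[F]}=\Ext^1(F,F)-\Ext^2(F,F)=-(\chi(F,F)-1)\in R(\CC^*).
\]
So $1/e^T(T^{\mathrm{vir}})=e^T(\Ext^2)/e^T(\Ext^1)$. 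With the convention $e^T(a-b)=e^T(a)/e^T(b)$ extended to virtual representations, this equals $e^T(\chi(F,F)-1)$, which is the last factor in \eqref{lccom}. The sign conventions, meaning the precise orientation and the convention for $e^T$ of a virtual class, need to be checked here so that they match \eqref{lccom}. The class $\chi(F,F)$ itself can then be computed by Theorem \ref{eqeuler}.

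Next we compute the numerator $\tau_i(\gamma)|_{[F]}$. Restrict the universal sheaf to $\{[F]\}\times X$; this gives $F$, linearized by its equivariant structure. The trivial-determinant normalization has to be tracked here, and I expect it is compatible with the chosen linearization. By base change, $\tau_i(\gamma)|_{[F]}=\int_X\gamma\cdot\{\mathrm{ch}^T(F)\,\mathrm{td}^T(K_X)^{-1}\}_{i+2}$, computed in equivariant cohomology of a point. Applying Remark \ref{smvir} to $X$, whose fixed points $p_{\pm12},p_{\pm10}$ are isolated, turns this into $\sum_{p\in X^T}\frac{\gamma|_p\,\{\mathrm{ch}^T(F)|_p\,\mathrm{td}^T(K_X|_p)^{-1}\}_{i+2}}{e^T(T_pX)}$. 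Here $\mathrm{ch}^T(F)|_p$ means the Chern character of the derived restriction $i_p^*[F]$, which is what "$F|_p$" denotes. If $p\notin\mathrm{supp}(F)$, the derived fiber is zero, so only $p\in\mathrm{supp}(F)^T$ contribute. This gives the inner sum, and multiplying by the denominator computed above yields \eqref{lccom}.

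The main obstacle is the step about the denominator. We must justify that the perfect obstruction theory inducing $[\bM_\beta(Y)]^{\mathrm{vir}}$ restricts, at a fixed sheaf, to $\Ext^1_X(F,F)-\Ext^2_X(F,F)$ as equivariant representations, including the correct equivariant weight on $K_X$ coming from the CY4 orientation. We also need the vanishing of $\Ext^3$ and the weight-zero trivial part of $\Hom$. I would handle this by citing the comparison of the DT4 virtual class with Thomas's class on $\bM_\beta(X)$ from \cite[Corollary 3.39]{Tho00}, together with \cite{CL14}.
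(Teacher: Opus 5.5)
Your proposal is correct and follows essentially the same route as the paper. Both arguments apply virtual localization on $\bM_\beta(X)$ with $T^{\mathrm{vir}}_{[F]}=\Ext^1(F,F)-\Ext^2(F,F)=1-\chi(F,F)$, using $\Hom(F,F)=\CC$ and $\Ext^3(F,F)=0$. They then use Atiyah--Bott localization on $X$ to write $\tau_i(\gamma)|_{[F]}$ as a sum over the $T$-fixed points of $\mathrm{supp}(F)$.

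Your Serre-duality argument for $\Ext^3(F,F)=0$, the derived-fiber reason why only support points contribute, and the remark on the trivial-determinant linearization make explicit points the paper leaves implicit. Your sign-convention hedge is the orientation choice the paper fixes later, in Theorem \ref{pfmeeting1}.
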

\begin{proof}
The equivariant-virtual tangent space at $[F]$ of $\bM:=\bM_{\beta}(X)$ is
\[
T_{[F]}^{\text{virt}}\bM=[\Ext^1(F,F)]-[\Ext^2(F,F)]=-\chi(F, F)+1
\]
since $\rH^1(\cO_X)=\Ext^3(F,F)=0$ by stability of the stable sheaf $F$ (\cite[Theorem 6.5]{CL14} and \cite[Corollary 3.39]{Tho00}). Therefore, by applying the virtual localization formula (Theorem \ref{virloc}), the integration of the insertion becomes
\begin{equation}\label{ab1}
\begin{split}
\langle \tau_i(\gamma)\rangle_{d}=\int_{[\bM]^{\mathrm{vir}}} \tau_i(\gamma)=\sum_{[F]\in \bM^T}\frac{\tau_i(\gamma)|_{[F]} }{e^T(T_{[F]}^{\text{vir}}\bM)}=\sum_{[F]\in \bM^T}\tau_i(\gamma)|_{[F]} \cdot e^T\left(\chi(F, F)-1\right).
\end{split}
\end{equation}
By applying localization formula (Remark \ref{smvir}) on $X$,
we have
\[
\tau_{i}(\gamma)|_{[F]}=\int_{X} \gamma \cdot \{\mathrm{ch}(F)\cdot \mathrm{td}(K_{X})^{-1}\}_{i+2}
=\sum_{p\in \text{supp}(F)^T} \frac{\gamma\cdot\{\mathrm{ch}^T(F)\cdot \mathrm{td}^T(K_{X})^{-1}\}_{i+2}|_p}{e^T(T_{X}|_p)}.
\]
Substituting this into \eqref{ab1}, we obtain the result.
\end{proof}
For the computation of invariants in \eqref{lccom} of Proposition \ref{lcfinse}, we consider the following. Let us denote the $T$-equivariant first Chern class of the trivial line bundle $\CC\otimes t$ by $c_1^T(\CC\otimes t)=\lambda$.
Suppose that the fiber of the hyperplane bundle $\cO_{X}(1)$ over the fixed point $p$ (in terms of the Plücker coordinates) is $\cO_{X}(1)|_{p}= \CC\otimes t^{m}, \; m\in \ZZ$. Then from the result of Section \ref{subge},
\begin{equation}\label{clssx}
h_1|_{p}=m\lambda,\; h_2|_{p}=\frac{1}{22}(m\lambda)^2,\; \mathrm{and}\; \text{td}^T(K_{X}|_p)^{-1}=\frac{1-e^{m\lambda}}{-m\lambda}.
\end{equation}
On the other hand, the equivariant Chern character of a stable sheaf $F$ supported on a smooth curve $i:C\subset X$ is given by (Theorem \ref{selfin})
\begin{equation}\label{cheq1}
\mathrm{ch}^T((i_*F)|_{p})=\mathrm{ch}^T\left([F]_p \cdot \lambda_{-1}(N_{C/X, p}^*)\right).
\end{equation}
After all, for the computation of the local form \eqref{lccom}, we need to know 
\begin{itemize}
\item the self-Euler pairing of the structure sheaf of fixed curves (cf. Theorem \ref{eqeuler}) and
\item the equivariant Chern character in \eqref{cheq1}.
\end{itemize}
On the other hand, one can easily check that the moduli space $\bM_4(X)$ 
is isomorphic to the Hilbert scheme $\bH_4(X)$ of curves $C$ with $\chi(\cO_C(m))=4m+1$, which parametrizes 
Cohen-Macaulay (CM) curves (\cite[Proposition 4.13]{CKK25}). Hence the $\CC^*$-fixed stable sheaves on $X$ 
are precisely the structure sheaves of $\CC^*$-fixed CM curves (\cite[Lemma 3.17]{CC11}).

\subsection{Equivariant Euler characteristics of fixed curves}\label{eq:ana}
In this subsection, we carry out the computation of the equivariant Euler characteristics of the fixed curves in $X$, which is one of the main technical ingredients of this paper. Recall that we regard the weights of $a_i$, $b_i$, $c_i$, $d_i$ (TABLE \ref{tab:fixedx}) as generators of the coordinate ring of the local chart $V_i$ of $X$ and thus they are the weights of cotangent (or conormal) sheaves of varieties. We begin by considering the case of irreducible $\CC^*$-fixed curves of degree $\leq 4$.
\begin{lemma}\label{d1l}
The equivariant Euler characteristic of the structure sheaf $\cO_{C}$ of the fixed curve $C\in \{L_{\pm2}, Q, C_4\}$ in TABLE \ref{ta:fixedcurve} is listed in TABLE \ref{tab:rateulr}.
\end{lemma}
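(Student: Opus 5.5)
The plan is to compute $\chi(\cO_C,\cO_C)\in R(\CC^*)=\ZZ[t^{\pm1}]$ directly from Corollary \ref{smcurvechi}, which reduces the Ext-pairing to Euler characteristics of exterior powers of the normal bundle:
\[
\chi(\cO_C,\cO_C)=\chi(C,\cO_C)-\chi(C,N_{C/X})+\chi(C,\wedge^2 N_{C/X}).
\]
Each curve $C\in\{L_{\pm2},Q,C_4\}$ is a smooth $\CC^*$-invariant $\PP^1$ with exactly two fixed points, by TABLE \ref{ta:fixedcurve}: $L_2$ passes through $p_{12},p_{10}$; $L_{-2}$ through $p_{-12},p_{-10}$; $Q$ through $p_{\pm10}$; and $C_4$ through $p_{\pm12}$. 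So each term can be evaluated by Corollary \ref{freechi} applied on $C$ itself, as a sum of two fraction contributions $[E]_p/(1-t^{w_p})$.

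The key steps are as follows.

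\textbf{(1)} At each fixed point $p\in C$, I read off the local data from TABLE \ref{tab:fixedx} and TABLE \ref{ta:fixedcurve}. The cotangent weight of $C$ at $p$ is the weight of the coordinate not in the ideal; for example, $a_9$ for $L_2$ at $p_{12}$, and $b_9$ at $p_{10}$. The conormal weights are those of the two generators of the ideal; for example, $\{a_1,a_5\}$, i.e. $\{-6,-4\}$, for $L_2$ at $p_{12}$, and $\{b_5,b_8\}$, i.e. $\{-2,-10\}$, at $p_{10}$. Dualizing gives the tangent and normal weights. This requires that the generators of the ideal be genuine local coordinates of the chart, which holds because $V_{\pm12}\cong\CC^3$ and $V_{\pm10}$ are open in $\CC^3$ containing the origin.

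\textbf{(2)} I then form the three classes:
\begin{itemize}
\item $\chi(\cO_C)=\sum_p 1/\lambda_{-1}(T^*_pC)$;
\item $\chi(N_{C/X})=\sum_p [N_p]/\lambda_{-1}(T^*_pC)$;
\item $\chi(\wedge^2N_{C/X})=\sum_p[\det N_p]/\lambda_{-1}(T^*_pC)$.
\end{itemize}
Each sum of two fractions with denominators $1-t^{w}$ and $1-t^{-w}$ must be a Laurent polynomial. I simplify each by polynomial division, equivalently by writing $N_{C/X}\cong\cO(a)\oplus\cO(b)$ equivariantly and listing the weights of the sections. As sanity checks, the ranks should give $\chi=1$ for $\cO_C$, $\chi(N)=\deg N+2$ with $\deg N_{C/X}=-K_X\cdot C-2=d-2$, and $\chi(\det N)=d-1$.

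\textbf{(3)} I combine these into $\chi(\cO_C,\cO_C)$, record the result in TABLE \ref{tab:rateulr}, and note that $\chi(\cO_C,\cO_C)-1=-[\Ext^1]+[\Ext^2]$. The point to flag, as used later, is that a weight-$0$ term survives in $\Ext^2$; this makes $e^T(\chi(\cO_C,\cO_C)-1)=0$ in \eqref{lccom}.

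The main obstacle is bookkeeping rather than concepts. First, the sign convention: the coordinates carry cotangent weights, so the fibre $N_p$ has the negatives of the conormal weights. Second, the $\lambda_{-1}$ classes must use cotangent weights consistently at both endpoints. Third, the curve $C$ must be matched at the second fixed point with the correct coordinates. For $C_4$ at $p_{-12}$ this comes from the stated sign symmetry $a_i\leftrightarrow d_i$; I would double-check it by confirming that the tangent weights of $C$ at its two endpoints are opposite, e.g. $-6$ versus $6$ for $C_4$. I would first carry out the computation for $L=L_2$ and use $L_{-2}$ as a check via the symmetry $t\mapsto t^{-1}$.
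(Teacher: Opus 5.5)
Your proposal is correct and is the same argument as the paper's: Corollary \ref{smcurvechi}, followed by localization of each $\chi(C,\wedge^i N_{C/X})$ at the two fixed points of $C$, with normal and cotangent weights read off from TABLE \ref{tab:fixedx} and TABLE \ref{ta:fixedcurve}. For $L_2$ your normal weights $t^{6}+t^{4}$, $t^{2}+t^{10}$ and denominators $1-t^{-2}$, $1-t^{2}$ are exactly the paper's, and the same bookkeeping reproduces the rows for $Q$ and $C_4$.

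One small correction to your closing aside: a weight-zero obstruction term occurs only for $Q$ and $C_4$, whereas for $L_{\pm2}$ the obstruction part is just $t^{\pm8}$. So $e^T(\chi(\cO_{L},\cO_{L})-1)\neq 0$ there, which is why $D_1=L$ does contribute in degree one.
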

\begin{table}[htbp]
\centering
\begin{tabular}{|c|l|M{6.5cm}|}
\hline
Rational curve $C$ & $\CC^*$-representation of $\chi(\cO_{C}, \cO_{C})$\\
\hline
\hline
$L_{2}$ & $1 - (t^{2} + t^{4}) + t^{8}$ \\
\hline
$L_{-2}$ & $1 - (t^{-2} + t^{-4}) + t^{-8}$ \\
\hline
$Q$ & $1- (t^{-2}+t^{2})+t^0$ \\
\hline
$C_{4}$ & $1-(t^{-4}+t^{-2}+t^{2}+t^{4})+(t^{-6}+t^0+t^{6})$ \\
\hline
\end{tabular}
\caption{Equivariant Euler pairing of torus fixed curves}
\label{tab:rateulr}
\end{table}
In Table \ref{tab:rateulr} and in similar situations below, parentheses indicate the deformation part (resp.\ obstruction part) $\Ext^1(\cO_C, \cO_C)$ (resp.\ $\Ext^2(\cO_C, \cO_C)$).
\begin{proof}
Since the fixed curves (TABLE \ref{ta:fixedcurve}) are locally defined by coordinate axes, the computation is essentially the same. Hence, we present the details for the first case, namely $L := L_2$. From Corollary \ref{smcurvechi}, Corollary \ref{lflfconc}, and $\chi(\cO_{L})=1$, we have
\[
\chi(\mathcal{O}_L, \mathcal{O}_L)
= 1-\left(\frac{N_{L/X, p_{12}}}{1 - T_{p_{12}}^*L}
+ \frac{N_{L/X, p_{10}}}{1 - T_{p_{10}}^*L}\right)+\left(\frac{\wedge^2N_{L/X,p_{12}}}{1 - T_{p_{12}}^*L}
+ \frac{\wedge^2N_{L/X,p_{10}}}{1 - T_{p_{10}}^*L}\right).
\]
In the local chart $V_{12}$ (resp. $V_{10}$) with coordinates $\{a_1,a_5,a_9\}$ (resp. $\{b_5,b_8,b_9\}$) (TABLE \ref{tab:fixedx}), the line $L$ is defined by $a_1=a_5=0$ (resp. $b_5=b_8=0$) (TABLE \ref{ta:fixedcurve}). By reading the weights of the normal and cotangent space of the line $L$ at the fixed points $p_{12}$ and $p_{10}$, one can see that
\[
\chi(\mathcal{O}_L, \mathcal{O}_L)=1- \left(\frac{t^{6}+t^{4}}{1-t^{-2}}+\frac{t^{2}+t^{10}}{1-t^{2}}\right)+\left(\frac{t^{10}}{1-t^{-2}}+\frac{t^{12}}{1-t^{2}}\right)=1-t^{2}-t^{4}+t^{8}.
\]
\end{proof}
\begin{remark}
Since the reduced scheme $\bM_1(X)_{\mathrm{red}}$ of the moduli space $\bM_1(X)$ is a smooth conic ($\cong\mathbf P^1$) and the action of $\mathrm{PGL}_2(\CC)=\mathrm{Aut}(\PP^1)$ is transitive (cf. \cite[Proposition 5.4.4]{KPS18}), the
local deformation spaces at all closed points are isomorphic to each other. As
$\dim \Ext^1(\mathcal O_L,\mathcal O_L)\geq2$ at the fixed point $[L]$ (Lemma \ref{d1l}), the same thing holds at every point, and thus the space $\bM_1(X)$ is everywhere non-reduced.
\end{remark}
Since higher-multiplicity curves supported a fixed line are not necessarily locally complete intersections, a study of their CM filtrations is required for the subsequent analysis. Note that the $\CC^*$-action on the line bundle $\cO_L(1)$ on the line $L$ is induced by the $\CC^*$-action on the hyperplane bundle $\cO_X(1)$. In particular, the action on the fiber $\cO_L(-1)|_{p_{12}}$ (resp. $\cO_L(-1)|_{p_{10}}$) has weight $-12$ (resp. $-10$).
\begin{proposition}\label{multfil}
There exists a unique multiplicity four curve $D_4$ in $X$ supported on the fixed line $\mathrm{red}(D_4)=L$ and $p_a(D_4)=0$. Furthermore, there exist CM-subcurves of $D_4$:
\begin{equation}\label{fil}
L:=D_1\subset D_2\subset D_3\subset D_4, \;\mathrm{mult}(D_k)=k
\end{equation}
such that
\begin{equation}\label{kclsd}
[\cO_{D_{k+1}}]=[\cO_{D_{k}}]+t^{10-2k}\cdot[\cO_L(-1)], \; 1\leq k \leq 3.
\end{equation}
\end{proposition}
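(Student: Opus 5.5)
The plan is to build the chain \eqref{fil} by successive ``ribbon'' extensions of $L$ inside $X$, using the Ba\u{n}ic\u{a}--Forster filtration of a Cohen--Macaulay multiple structure. We then read off the $K$-classes \eqref{kclsd} from the $\CC^*$-characters of the local coordinate rings in the two charts $V_{12}$ and $V_{10}$ that cover $L$.

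\textbf{Step 1 (conormal bundle).} First determine $N^*_{L/X}$ as a $\CC^*$-equivariant bundle. In $V_{12}$ the conormal generators are $a_1,a_5$, of weights $-6,-4$; in $V_{10}$ they are $b_5,b_8$, of weights $-2,-10$. The transition between the charts should identify $N^*_{L/X}\cong \cO_L(-1)\oplus\cO_L(2)$, up to the twists recorded by these weights. This is the special type $(1,-2)$ of the normal bundle, which is expected for lines on the Mukai--Umemura threefold. I would check it with Corollary~\ref{freechi}: the character of $\chi(N^*_{L/X})$ computed from the two fixed points must match that of the proposed splitting.

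\textbf{Step 2 (existence, inductively).} For any CM curve $D$ of multiplicity $k+1$ supported on $L$, with largest CM subcurve $D'$ of multiplicity $k$, the kernel $\cI_{D'}/\cI_{D}$ is a line bundle on $L$ receiving a multiplication map from $(\text{first quotient})^{\otimes k}$. The first quotient is a line bundle quotient of $N^*_{L/X}$. Requiring $p_a(D_4)=0$ means $\sum_k \chi(\cI_{D_k}/\cI_{D_{k+1}})=0$, and the ansatz in \eqref{kclsd} is that each successive quotient is $\cO_L(-1)$ with Euler characteristic $0$.

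Since $\Hom(\cO_L(2),\cO_L(-1))=0$, a surjection $N^*_{L/X}\to\cO_L(-1)$ is unique up to scalar. Hence $D_2$ is unique and $\CC^*$-fixed, locally $\cI_{D_2}=\cI_L^2+(\text{the }\cO(2)\text{-generator})$. I would then write explicit $\CC^*$-invariant ideals in both charts:
\begin{itemize}
\item in $V_{10}$, $\cI_{D_k}=(b_8-f_k,\;b_5^k)$ with $f_k$ a weight $-10$ polynomial, so $D_k$ is curvilinear in the $b_5$-direction;
\item in $V_{12}$, an ideal whose successive quotients are generated by monomials of weights $-4,-6,-8$ (for instance $a_5$, then $a_1$ corrected by $a_5^2$-type terms).
\end{itemize}
Next verify that these ideals agree on $V_{12}\cap V_{10}$ via the relations \eqref{eq:u12} and \eqref{bicoo}, and that they define CM schemes. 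The graded pieces then have fiber weights $-2k$ at $p_{10}$ and $-2-2k$ at $p_{12}$. Since $\cO_L(-1)$ has weights $-10$ and $-12$ there, each piece is $t^{10-2k}\otimes\cO_L(-1)$, which gives \eqref{kclsd}.

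\textbf{Step 3 (uniqueness).} I expect uniqueness to be the main obstacle. The argument I would try first:
\begin{itemize}
\item The first quotient $\cL_1$ of $N^*_{L/X}$ is either $\cO_L(-1)$ or of degree $\ge 2$.
\item Generically nonzero multiplication maps force $\deg\cL_j\ge j\deg\cL_1$, which together with $p_a=0$ rules out $\deg\cL_1\ge2$.
\item When $\cL_1=\cO_L(-1)$, I would bound $\deg\cL_j\ge -1$ using that $D_j$ sits in the formal neighbourhood determined by the unique $D_2$. This is a local computation showing that the only degree $-1$ extension at each stage is the one built above.
\item Since $p_a=0$ and each $\chi(\cL_j)\ge 0$, all $\cL_j\cong\cO_L(-1)$, and the extension at each step is unique by the same $\Hom$-vanishing.
\end{itemize}
If the degree bound in the third item resists, a fallback is to use the isomorphism $\bM_4(X)\cong\bH_4(X)$ together with the torus action. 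Any such curve degenerates to a $\CC^*$-fixed one, and fixed ideals are determined by weight considerations in the two charts, so one can enumerate them directly.
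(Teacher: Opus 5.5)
Your final weight bookkeeping is correct and agrees with the paper. Once the chain exists with every $\cK_{k+1}\cong\cO_L(-1)$, the kernel generators have weights $-4,-6,-8$ at $p_{12}$ (the paper's $\bar a_5,\bar a_1,\bar a_{12}$) and $-2,-4,-6$ at $p_{10}$, which gives $t^{10-2k}$.

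\textbf{The gap.} Existence of $D_3,D_4$ and uniqueness are supported only by arguments that cannot see the relevant data, because torus weights at $p_{12},p_{10}$ do not determine splitting types. This already makes your Step 1 check vacuous. Localization only sees fibres, and the data $\{-6,-4\}$ at $p_{12}$, $\{-2,-10\}$ at $p_{10}$ are the fixed-fibre data of both $\cO_L(-1)\oplus\cO_L(2)$ and $\cO_L(-2)\oplus\cO_L(3)$.

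In Step 3 this becomes fatal. With $\mathcal{L}_1\cong\cO_L(-1)$, generic surjectivity of $\mathcal{L}_1\otimes\mathcal{L}_j\to\mathcal{L}_{j+1}$ together with $\sum_j\chi(\mathcal{L}_j)=0$ allows two degree sequences, $(-1,-1,-1)$ and $(-1,-2,0)$. In the second, $D_3$ is a primitive triple structure with $\mathcal{L}_2\cong\mathcal{L}_1^{\otimes 2}$. Locally it is $(a_1,a_5^3)$ at $p_{12}$ and $(b_8,b_5^3)$ at $p_{10}$, and it passes every weight test. Excluding it is precisely your unproved bound $\deg\mathcal{L}_j\ge -1$.

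The real input is second order. The sheaf $\cI_{D_2}\otimes\cO_L$ is a rank-two bundle, an extension of $\cO_L(2)$ by $\mathcal{L}_1^{\otimes2}=\cO_L(-2)$, and $\mathcal{L}_2$ is a line-bundle quotient of it. If it splits as $\cO_L(-1)\oplus\cO_L(1)$, then $\mathcal{L}_2\cong\cO_L(-1)$ is forced and unique. If it splits as $\cO_L(-2)\oplus\cO_L(2)$, the primitive case is forced. These two splittings again have identical fixed-fibre weights ($\{-6,-8\}$ and $\{-4,-10\}$). So the answer must come from the transition between $V_{12}$ and $V_{10}$, i.e.\ from the equations of $X$, and an analogous computation is needed one step higher.

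\textbf{Further problems.} The claim that ``the extension at each step is unique by the same $\Hom$-vanishing'' is wrong for $k\ge 2$. Here $\mathcal{L}_k$ is a quotient of $\cI_{D_k}\otimes\cO_L$, not of $N^*_{L/X}$. In the actual $D_3$, the multiplication $\mathcal{L}_1^{\otimes 2}\to\mathcal{L}_2$ is a section of $\cO_L(1)$ vanishing at $p_{12}$: there $a_5^2\equiv c\,a_9a_1$ modulo $\cI_{D_3}$ with $c\neq 0$, so $D_3$ is not even curvilinear at $p_{12}$. The vanishing $\Hom(\cO_L(2),\cO_L(-1))=0$ does not control this datum.

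Your existence step is likewise only an ansatz. The $V_{12}$-ideals of $D_3,D_4$ are never written down, and the gluing and CM checks are deferred. This is exactly what the paper does not reprove: it takes the explicit ideals of $D_1,\dots,D_4$ from \cite{CKK25} and only extracts the kernel generators with Macaulay2.

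Your fallback reduction to fixed points is sound once you add that a non-fixed orbit in a projective $\CC^*$-variety has two distinct limits. Then a closed $\CC^*$-stable locus with a single fixed point is a point. However, ``fixed ideals are determined by weight considerations'' is false, as the primitive candidate above shows. The enumeration must include the compatibility test on $V_{12}\cap V_{10}$, which amounts to redoing the classification of \cite{CKK25}.

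Finally, B\u{a}nic\u{a}--Forster quotients need not be line bundles, so you must also exclude $\mathcal{L}_1=N^*_{L/X}$. This is easy: $p_a=0$ would force $\mathcal{L}_2\cong\cO_L(-4)$, which receives no nonzero map from $\mathrm{Sym}^2N^*_{L/X}=\cO_L(-2)\oplus\cO_L(1)\oplus\cO_L(4)$.
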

\begin{proof}
In the proof of Proposition 4.1, 4.14, and 4.18 of \cite{CKK25}, the authors found the CM subcurves of $D_4$ with the CM filtration \eqref{fil}. In fact, let $\cI_{D_k}$ be the ideal of the curve $D_k$ in the standard Schubert chart $V_m^{\circ}\subset \Gr(3, U_6)$. Then $D_k$ is defined as
\[\begin{split}
\cI_{D_1}=&\langle a_1, a_2, a_3, a_4, a_5, a_6, a_7, a_8, a_{10}, a_{11},a_{12}\rangle,\\
\cI_{D_2}=&\langle a_1, a_2, a_3, a_4, a_6, a_7, a_8, a_{11}, a_{12}, 6a_5+a_{10}, a_{10}^2\rangle,\\
\cI_{D_3}=&\langle a_2, a_3, a_4, a_{7}, a_8, a_{12}, 5a_6+a_{11}, 6a_5+a_{10},  10a_1-a_{11}, a_{11}^2, a_{10}a_{11}, 5a_{10}^2-6a_9a_{11}\rangle,  \,\mathrm{and}\\
\cI_{D_4}=&\langle a_3, a_4, a_8, a_7+3a_{12}, 5a_6+a_{11}, 6a_5+a_{10},  5a_2-9a_{12}, 10a_1-a_{11}, a_{10}a_{12},a_{11}^2,\\
& a_{10}a_{11}-18a_9a_{12}, 5a_{10}^2-6a_9a_{11}+12a_{12} \rangle.\\
\end{split}
\]
Let $\cK_{k+1}$ be the kernel of the canonical restriction map: $\cO_{D_{k+1}}\twoheadrightarrow \cO_{D_{k}}$. Since $\cK_{k+1}\cong \cO_L(-1)$ and $\cK_{k+1}(V_{12})=\cI_{D_{k}}(V_{12})/\cI_{D_{k+1}}(V_{12})$ from its definition, let us pick up a generator of $\cK_{k+1}(V_{12})$ as an $\cO_L(U_{12})=\CC[a_9]$-module. Using Macaulay2 (\cite{M2}), we readily obtain that
\[
\cK_2(V_{12})=\langle \overline{a_5}\rangle,\;\; \cK_3(V_{12})=\langle \overline{a_1}\rangle, \;\mathrm{and}\;\cK_4(V_{12})=\langle \overline{a_{12}}\rangle.
\]
Since the weights of the generators are $\text{wt}(a_5)=-4$, $\text{wt}(a_1)=-6$, $\text{wt}(a_{12})=-8$, we finished the proof of the claim.
\end{proof}

\begin{corollary}\label{muchar}
Under the notation of Proposition~\ref{multfil}, we obtain the equivariant Euler characteristics of the curves $D_k$, as listed in Table~\ref{tab:multiple-curves-chi}.
\end{corollary}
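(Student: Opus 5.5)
The plan is to compute $\chi(\cO_{D_k},\cO_{D_k})\in R(\CC^*)=\ZZ[t^{\pm1}]$ purely in equivariant $K$-theory, using the filtration of Proposition~\ref{multfil}. Iterating \eqref{kclsd} gives
\[
[\cO_{D_k}]=[\cO_L]+\sum_{j=1}^{k-1}t^{10-2j}\,[\cO_L(-1)],\qquad 1\le k\le 4 .
\]
The pairing $\chi(-,-)$ is additive in each variable. It is twisted by characters according to $\chi(t^aE,t^bF)=t^{b-a}\chi(E,F)$, because the first argument enters dualized in Theorem~\ref{eqeuler}. Expanding, $\chi(\cO_{D_k},\cO_{D_k})$ becomes an explicit $\ZZ[t^{\pm1}]$-combination of four basic pairings:
\[
\chi(\cO_L,\cO_L),\quad \chi(\cO_L,\cO_L(-1)),\quad \chi(\cO_L(-1),\cO_L),\quad \chi(\cO_L(-1),\cO_L(-1))=\chi(\cO_L,\cO_L).
\]
The last equality holds since twisting both arguments by the same equivariant line bundle changes nothing. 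The first pairing is already in Table~\ref{tab:rateulr}.

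The two mixed pairings are computed as in the proof of Lemma~\ref{d1l}. By Theorem~\ref{eqeuler} combined with Theorem~\ref{selfin}, a twist by a line bundle $\cO_L(m)$ only multiplies the local contribution at each fixed point $p\in\{p_{12},p_{10}\}$ by the character of the fiber $\cO_L(m)|_p$. For the second argument this factor appears directly; for the first argument it appears dualized. Concretely,
\[
\chi(\cO_L(a),\cO_L(b))=\sum_{p}\frac{[\cO_L(b-a)]_p\,\lambda_{-1}(N^*_{L/X,p})^{\vee}\lambda_{-1}(N^*_{L/X,p})}{\lambda_{-1}(T^*_pX)} .
\]
Equivalently, this is $\sum_i(-1)^i\chi(L,\wedge^iN_{L/X}\otimes\cO_L(b-a))$, evaluated by Corollary~\ref{freechi} on $L\cong\PP^1$. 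The fiber weights of $\cO_L(-1)$ are $-12$ at $p_{12}$ and $-10$ at $p_{10}$, and the normal and cotangent weights are read off from Table~\ref{tab:fixedx} and Table~\ref{ta:fixedcurve}, exactly as in Lemma~\ref{d1l}. Each resulting rational function must collapse to a Laurent polynomial, which is a useful check on the signs and weights. Summing with the coefficients $t^{\pm(10-2j)}$ and $t^{(10-2i)-(10-2j)}$ gives the $K$-theoretic class for each $D_k$, $k=2,3,4$.

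The main obstacle is the finer information recorded in Table~\ref{tab:multiple-curves-chi}, namely the splitting of the class into the $\Ext^1$ part and the $\Ext^2$ part. The $K$-theory class only sees the alternating sum, so cancellations between equal weights in $\Ext^1$ and $\Ext^2$ are invisible, and this matters because weight-zero obstruction terms decide vanishing of contributions. To resolve it, I would first determine $\Ext^1(\cO_{D_k},\cO_{D_k})$ independently. Since $\bM_4(X)\cong\bH_4(X)$, this space is the tangent space $\Hom(\cI_{D_k},\cO_{D_k})$. I would compute it with its weights on the charts $V_{12}$ and $V_{10}$ using the explicit ideals of Proposition~\ref{multfil} (via Macaulay2 as in the paper), gluing the two chart computations as in the Čech description. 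Then $\Ext^2$ is forced as $[\Ext^1]-[\chi]+1$. As a consistency check, I would use Serre duality $\Ext^2(F,F)\cong\Ext^1(F,F\otimes K_X)^\vee$, taking the equivariant twist by $K_X$ into account.
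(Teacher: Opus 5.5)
This is essentially the paper's proof. The paper uses the same bilinear expansion via \eqref{kclsd}, only organized inductively: it builds $D_{k+1}$ from $D_k$ and localizes the cross terms $\chi(\cO_L(-1),\cO_{D_k})$ and $\chi(\cO_{D_k},\cO_L(-1))$ at $p_{12},p_{10}$. Your basic pairings come out as $\chi(\cO_L(-1),\cO_L)=t^{10}-t^{14}-t^{16}+t^{22}$ and $\chi(\cO_L,\cO_L(-1))=-1+t^{-2}+t^{-4}-t^{-8}$, and these do reproduce the table. For the $\Ext^1$/$\Ext^2$ split, the paper takes the deformation weights from \cite{CKK25} rather than recomputing $\Hom(\cI_{D_k},\cO_{D_k})$ as you propose.

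Two slips to fix. Since $\Hom(\cO_{D_k},\cO_{D_k})=\CC$ has weight $0$ and $\Ext^3=0$, the obstruction class is $[\Ext^2]=[\Ext^1]+\chi(\cO_{D_k},\cO_{D_k})-1$, not $[\Ext^1]-\chi+1$. Also, $\Ext^1(\cO_{D_k},\cO_{D_k})\cong\Hom(\cI_{D_k},\cO_{D_k})$ holds for every $k$ because $H^1(\cO_{D_k})=0$; it does not come only from $\bM_4\cong\bH_4$.
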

\begin{table}[htbp]
\centering
\begin{tabular}{|c|l|M{6.5cm}|}
\hline
Curve $D_k$ & $T$-representation of $\chi(\cO_{D_k}, \cO_{D_k})$\\
\hline
\hline
$D_{1}$ & $1 - (t^{2} + t^{4}) + t^{8}$ \\
\hline
\hline
$D_{2}$ & $1 - (t^{2} + t^{4}) + t^{14}$ \\
\hline
\hline
$D_{3}$ & $1 - (t^{2} + t^{4} + t^{6}) + t^{14} + t^{16}$ \\
\hline
\hline
$D_{4}$ & $1 - (t^{2} + 2t^{4} + t^{6}) + t^{14} + t^{16} + t^{18}$ \\
\hline
\end{tabular}
\caption{Equivariant Euler pairing of multiple curves}
\label{tab:multiple-curves-chi}
\end{table}
\begin{proof}
By the additivity of the equivariant Euler pairing, the computation is inductive. The computation for the line $D_1=L$ was done in Lemma \ref{d1l}. Since other case is similar to each other, so we only present the computation for the curve $D_4$. From the equality $$[\cO_{D_{4}}]=[\cO_{D_{3}}]+t^{4}\cdot[\cO_L(-1)]$$ in \eqref{kclsd}, we know
\begin{equation}\label{chi4}
\chi(\cO_{D_4}, \cO_{D_4})=\chi(\cO_{D_3}, \cO_{D_3})+\chi(\cO_{L}, \cO_{L})+t^{-4}\chi(\cO_L(-1),  \cO_{D_3})+t^{4}\chi( \cO_{D_3}, \cO_L(-1)).
\end{equation}
By Theorem \ref{eqeuler}, Theorem \ref{selfin}, and Proposition~\ref{multfil}, the third term of \eqref{chi4} of the right side becomes
{\fontsize{8.5}{10}\selectfont\[
\begin{split}
&\chi(\cO_L(-1),  \cO_{D_3})=\frac{i_{p_{12}}^*([\cO_L(-1)])^\vee \cdot i_{p_{12}}^*([\cO_{D_3}])}
{\lambda_{-1}(T_{p_{12}}^*X)}+\frac{i_{p_{10}}^*([\cO_L(-1)])^\vee \cdot i_{p_{10}}^*([\cO_{D_3}])}
{\lambda_{-1}(T_{p_{10}}^*X)}.\\
&=\frac{i_{p_{12}}^*([\cO_L(-1)])^\vee \cdot i_{p_{12}}^*([\cO_L]+t^{8}[\cO_L(-1)]+t^{6}[\cO_L(-1)])}
{\lambda_{-1}(T_{p_{12}}^*X)}+\frac{i_{p_{10}}^*([\cO_L(-1)])^\vee \cdot i_{p_{10}}^*([\cO_L]+t^{8}[\cO_L(-1)]+t^{6}[\cO_L(-1)])}
{\lambda_{-1}(T_{p_{10}}^*X)}.\\
&=\frac{t^{12}(1-t^{6})(1-t^{4})\cdot(1+t^{8}\cdot t^{-12}+t^{6}\cdot t^{-12})(1-t^{-6})(1-t^{-4})}{(1-t^{-6})(1-t^{-4})(1-t^{-2})}+\frac{t^{10}(1-t^{2})(1-t^{10})\cdot(1+t^{8}\cdot t^{-10}+t^{6}\cdot t^{-10})(1-t^{-2})(1-t^{-10})}{(1-t^{-2})(1-t^{-10})(1-t^{2})}\\
&=t^{22}-t^{12}-t^{10}+t^{6}.
\end{split}
\]}
A similar computation yields that the last term in \eqref{chi4} is
\[
\chi(\cO_{D_3}, \cO_L(-1))=t^{2}-t^{-4}.
\]
By plugging these ones into \eqref{chi4} and using lower degree cases ($D_1$ and $D_3$) in TABLE \ref{tab:multiple-curves-chi}, we get the result.
\end{proof}
\begin{remark}
In Corollary~\ref{muchar}, the Euler characteristic of multiple lines (denoted by $D_{-k}$) supported on the fixed line $L_{-2}$ is obtained by reversing the signs of the weights.
\end{remark}
For the case of reducible fixed curves, we prove the following results.
\begin{lemma}\label{qleq}
Let $L=L_2$ and $Q$ be the fixed line and conic respectively such that the intersection point is $Q\cap L=\{p_{10}\}$. Then 
\[
\chi(\cO_{Q}, \cO_{L}(-1))=-t^{-12}+t^{-10}\;\mathrm{and} \;\chi(\cO_{L}(-1), \cO_{Q})=-t^{20}+t^{22}.
\]
\end{lemma}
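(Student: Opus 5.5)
The plan is to apply the equivariant Riemann--Roch localization of Theorem~\ref{eqeuler} directly, as in the proof of Corollary~\ref{muchar}. The one new feature is that the two supports share a single fixed point. The fixed points of $X$ on $Q$ are $p_{10}$ and $p_{-10}$, and those on $L=L_2$ are $p_{12}$ and $p_{10}$. At any fixed point $p$ outside $\mathrm{supp}(\cO_Q)\cap\mathrm{supp}(\cO_L(-1))$, one of the two restrictions $i_p^*[\,\cdot\,]$ vanishes, since that sheaf is zero near $p$. Hence in both pairings only the $p_{10}$-term of Theorem~\ref{eqeuler} survives. This is exactly where the claimed simplicity of the answer comes from.

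Next I compute the three local ingredients at $p_{10}$ from TABLES~\ref{tab:fixedx} and~\ref{ta:fixedcurve}, reading coordinate weights as cotangent/conormal weights as in the proof of Lemma~\ref{d1l}.
\begin{enumerate}[(i)]
\item Denominator: $\lambda_{-1}(T^*_{p_{10}}X)=(1-t^{-2})(1-t^{-10})(1-t^{2})$, from the coordinates $b_5,b_8,b_9$.
\item Conic: $Q=\{b_5=b_9=0\}$, so by Theorem~\ref{selfin} we get $i_{p_{10}}^*[\cO_Q]=(1-t^{-2})(1-t^{2})$. This class is self-dual.
\item Line: $L=\{b_5=b_8=0\}$, and the fiber of $\cO_L(-1)$ at $p_{10}$ has weight $-10$, as noted just before Proposition~\ref{multfil}. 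So $i_{p_{10}}^*[\cO_L(-1)]=t^{-10}(1-t^{-2})(1-t^{-10})$, whose dual is $t^{10}(1-t^{2})(1-t^{10})$.
\end{enumerate}

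Plugging these into Theorem~\ref{eqeuler} and cancelling common factors gives the two pairings. For the first,
\[
\chi(\cO_Q,\cO_L(-1))=\frac{(1-t^{-2})(1-t^{2})\cdot t^{-10}(1-t^{-2})(1-t^{-10})}{(1-t^{-2})(1-t^{-10})(1-t^{2})}=t^{-10}(1-t^{-2})=-t^{-12}+t^{-10}.
\]
For the second, the numerator is $t^{10}(1-t^{2})(1-t^{10})(1-t^{-2})(1-t^{2})$, and after cancellation we are left with $t^{10}(1-t^{2})\frac{1-t^{10}}{1-t^{-10}}$. Using $\frac{1-t^{10}}{1-t^{-10}}=-t^{10}$, this equals $-t^{20}+t^{22}$, as claimed.

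The computation is routine, so the main point requiring care is bookkeeping of conventions. One must confirm that the dual in Theorem~\ref{eqeuler} is taken on the first argument, and that the weight of $\cO_L(-1)|_{p_{10}}$ is $-10$ rather than $+10$. To check both, I would verify that the same conventions reproduce the value $t^{22}-t^{12}-t^{10}+t^{6}$ obtained in the proof of Corollary~\ref{muchar}; they do. A secondary check is that the vanishing of the off-support terms is legitimate even though $L\cup Q$ is not lci at $p_{10}$. This causes no trouble, because Theorem~\ref{selfin} is only applied to $L$ and to $Q$ separately, each of which is smooth.
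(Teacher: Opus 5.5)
Your proposal is correct and follows essentially the same route as the paper: Thomason's localization reduces both pairings to the single $p_{10}$-term, and the same local data give the stated answers, namely the conormal weights of $Q$ and $L$ read off from $b_5,b_8,b_9$ and the weight $-10$ of $\cO_L(-1)|_{p_{10}}$. One harmless inaccuracy in your closing aside: $L\cup Q$ is in fact a local complete intersection at $p_{10}$, being locally cut out by $b_5$ and $b_8b_9$ (as the paper notes in Proposition \ref{pr:l2q}), though this plays no role in your argument.
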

\begin{proof}
Since $Q\cap L=\{p_{10}\}$, we only need to consider the localization in the local chart $V_{10}$ of $X$. Note that $Q$ is defined by $b_5=b_9=0$ in $V_{10}$ (TABLE \ref{ta:fixedcurve}). From Theorem \ref{eqeuler}, Theorem \ref{self} and Proposition~\ref{multfil}, we get
\[
\begin{split}
\chi(\cO_{Q}, \cO_{L}(-1))&=t^{-10}\cdot\frac{\lambda_{-1}(N_{Q/X, p_{10}}^*)^\vee \cdot \lambda_{-1}(N_{L /X, p_{10}}^*)}{\lambda_{-1}(T_{p_{10}}^*X)}=t^{-10}\cdot\frac{(1-t^2)(1-t^{-2})\cdot (1-t^{-2})(1-t^{-10})}{(1-t^{-2})(1-t^{-10})(1-t^{2})}\\
&=t^{-10}-t^{-12}\; \mathrm{and}\\
\chi(\cO_{L}(-1), \cO_{Q})&=t^{10}\cdot\frac{\lambda_{-1}(N_{L /X, p_{10}}^*)^\vee \cdot \lambda_{-1}(N_{Q/X, p_{10}}^*)}{\lambda_{-1}(T_{p_{10}}^*X)}=t^{10}\cdot\frac{(1-t^{2})(1-t^{10})\cdot(1-t^2)(1-t^{-2})}{(1-t^{-2})(1-t^{-10})(1-t^{2})}\\
&=t^{22}-t^{20}.
\end{split}
\]
\end{proof}
\begin{remark}
In the statement of Lemma \ref{qleq}, the Euler characteristic for the fixed line $L_{-2}$ is given by reversing the sign of weights.
\end{remark}
\begin{proposition}\label{pr:l2q}
Let $C=L^2\cup Q$ be the union of the planar double line $L^2(=D_2)$ and the smooth conic $Q$ such that $Q\cap L=\{p_{10}\}$.
Then
\begin{equation}\label{ql2}
[\cO_C]=[\cO_Q]+(t^{8}+t^{12})[\cO_L(-1)].
\end{equation}
Furthermore, $\chi(\cO_C, \cO_C)=1-(t^{-2}+2t^{2}+t^{6})+(t^0+t^{10}+t^{14})$.
\end{proposition}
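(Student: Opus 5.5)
The plan is to prove the class identity \eqref{ql2} first, using an explicit filtration of $\cO_C$. The Euler pairing then follows by bilinearity from Lemma \ref{d1l} and Lemma \ref{qleq}.

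\emph{Step 1: the class of $\cO_C$.} Consider the restriction sequence
\[
\ses{\cK}{\cO_C}{\cO_Q},\qquad \cK=\cI_{Q}\cdot\cO_C=\cI_Q/\cI_C .
\]
The sheaf $\cK$ is supported on $L$ and is a pure one-dimensional $\cO_{L^2}$-module. I would filter it as $0\subset \cK'\subset\cK$, with $\cK'=\cK\cap \cI_L\cO_C$. Both graded pieces are then $\CC^*$-equivariant line bundles on $L\cong\PP^1$.

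The degrees are fixed by two constraints. First, $\chi(\cO_C)=1$, since $C\in\bH_4(X)$. Second, $\chi(\cO_Q)=1$. Together these give $\chi(\cK)=0$. Each piece has degree $-1$: this holds in the analogous $D_k$ computation, and it is also forced because the pieces are ideals of a point on $L$, or the conormal twist $\cO_L(-1)$.

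So $[\cK]=(t^{a}+t^{b})[\cO_L(-1)]$, and it remains to pin down the characters $t^a,t^b$. Following the proof of Proposition \ref{multfil}, I would write $\cI_C=\cI_{L^2}\cap\cI_Q$ in the chart $V_{10}$ (coordinates $b_5,b_8,b_9$) and in the chart $V_{12}$. Here $L^2=D_2$, and its ideal in $V_{12}$ is $\cI_{D_2}$ from Proposition \ref{multfil}.

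In $V_{12}$ the conic $Q$ is absent, so $\cK|_{V_{12}}=\cO_{L^2}|_{V_{12}}$. This is generated over $\CC[a_9]$ by $1$ and $\overline{a_5}$. Comparing with the normalization that the fiber $\cO_L(-1)|_{p_{12}}$ has weight $-12$, these two generators produce the twists $t^{12}$ and $t^{8}$; the shift by $t^4$ relative to $[\cO_{D_2}]=[\cO_L]+t^8[\cO_L(-1)]$ comes from the relevant generator's weight. I would then cross-check at $p_{10}$ with a Macaulay2 computation of $\cI_Q/\cI_C$ over $\CC[b_9]$, to confirm that the equivariant structures glue. This yields \eqref{ql2}.

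\emph{Step 2: the pairing.} Set $A=(t^8+t^{12})[\cO_L(-1)]$. Bilinearity of the pairing gives
\[
\chi(\cO_C,\cO_C)=\chi(\cO_Q,\cO_Q)+(t^{-8}+t^{-12})(t^{8}+t^{12})\chi(\cO_L,\cO_L)
+(t^{8}+t^{12})\chi(\cO_Q,\cO_L(-1))+(t^{-8}+t^{-12})\chi(\cO_L(-1),\cO_Q).
\]
Here I use two facts. The dual of a character is its inverse in the first slot. Also $\chi(\cO_L(-1),\cO_L(-1))=\chi(\cO_L,\cO_L)$, because the twist cancels.

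The inputs are then:
\begin{itemize}
\item $\chi(\cO_Q,\cO_Q)$ and $\chi(\cO_L,\cO_L)$, read off from TABLE \ref{tab:rateulr};
\item the two cross terms $t^{-10}-t^{-12}$ and $t^{22}-t^{20}$, from Lemma \ref{qleq}.
\end{itemize}
Expanding and cancelling the Laurent polynomials should leave $1-(t^{-2}+2t^{2}+t^{6})+(t^0+t^{10}+t^{14})$. The sign pattern confirms which terms are deformations and which are obstructions.

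\emph{Main obstacle.} The difficult step is Step 1, namely the exact characters of the two graded pieces of $\cK$. This requires the explicit ideal of $L^2\cup Q$ near $p_{10}$, where the two components meet and $\cI_{L^2}\cap\cI_Q$ is not simply a product. I would first try computing the intersection of ideals directly in the $b$-coordinates. If the gluing is ambiguous, the final Laurent polynomial's consistency provides a check, since it must satisfy $\chi(\cO_C,\cO_C)|_{t=1}=1-4+3=0$ and the weight-zero obstruction must appear.
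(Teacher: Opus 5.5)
Your proposal is correct and is essentially the paper's proof. Your $\cK'=\cK\cap\cI_L\cO_C=\cI_{L\cup Q}/\cI_C$ is exactly the paper's $\cM$, your $\cK/\cK'$ is its $\cN$, and the bilinear expansion is identical. The only difference is where you read the characters. You use $p_{12}$ and the generators $1,\overline{a_5}$ of $\cO_{D_2}$; this is legitimate because $Q\cap V_{12}=\emptyset$ and a degree $-1$ equivariant line bundle on $L$ is determined by one fiber weight. The paper instead reads them at $p_{10}$ from $\bar b_5,\bar b_9$, and both choices give $t^{8}$ and $t^{12}$.
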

\begin{proof}
Note that since $L$ and $Q$ intersect properly, $L\cup Q$ is a locally complete intersection (See TABLE \ref{ta:fixedcurve}).
Let $\cM$ (resp. $\cN$) be the kernel of the restriction map $\cO_C \twoheadrightarrow \cO_{L\cup Q}$ (resp. $\cO_{L\cup Q}\twoheadrightarrow \cO_Q$). Then $[\cM]=t^m[\cO_L(-1)]$, $[\cN]=t^n[\cO_L(-1)]$ and thus $[\cO_C]=[\cO_Q]+[\cM]+[\cN]$. Let us determine the weights $m$ and $n$ by a local computation. The defining ideals of $L\cup Q$ and $C$ over $V_{10}^\circ$ are respectively (\cite[Proposition 4.2 and Proposition 4.18]{CKK25})
\[\begin{split}
I_Q&=\langle b_1,b_2,b_4,b_5,b_6,b_7,b_3+9b_8,b_9, b_{10}, b_{11}, b_{12}\rangle,\\
I_{L\cup Q}&= \langle b_1,b_2,b_4,b_5,b_6,b_7,b_{10},b_{11},b_{12}, b_3+9b_8, b_8 b_9\rangle, \;\mathrm{and}\\
I_{C}&=\langle b_{12},b_{11},b_7, b_6, 6b_5-b_{10},b_4,b_3+9b_8,b_2,b_1,b_{10}^2,b_8b_{10},b_8b_9\rangle.
\end{split}
\]
With the help of Macaulay2 program (\cite{M2}) again, \[\cM(V_{10})=I_{L\cup Q}/I_{C}=\langle \bar{b}_5\rangle\; \mathrm{and}\; \cN(V_{10})=I_{Q}/I_{L\cup Q}=\langle \bar{b}_9\rangle.\] Therefore, $m=8$ and $n=12$ since the weights are $\text{wt}(b_5)=-2$, $\text{wt}(b_9)=2$, and $\text{wt}(\cO_L(-1)|_{p_{10}})=-10$ (TABLE \ref{tab:fixedx} and Proposition \ref{multfil}). After all, we proved the equality \eqref{ql2}.
On the other hand, from the linearity of Euler characteristic, we see that
\[\begin{split}
\chi(\cO_{C}, \cO_{C})&=\chi(\cO_Q, \cO_Q)+(2+t^4+t^{-4})\chi(\cO_L, \cO_L)\\
&+ (t^{12}+t^8)\chi(\cO_Q, \cO_L (-1)) +(t^{-12}+t^{-8})\chi(\cO_L(-1), \cO_Q)\\
&=2 -t^{-2}- 2t^{2} - t^{6}+t^{10} + t^{14},
\end{split}
\]
where the second equality holds from TABLE \ref{tab:rateulr} and Lemma \ref{qleq}.
\end{proof}
\begin{remark}\label{re:def}
From Lemma \ref{d1l}, Lemma \ref{qleq} and the proof of Proposition \ref{pr:l2q}, one can easily see that
\[\chi(\cO_{L\cup Q},\cO_{L\cup Q})=1-(t^{-2}+t^{2}+t^4)+(t^0+t^{10}).\]
\end{remark}
\begin{proposition}\label{qll}
Let $C= L_2 \cup L_{-2}\cup Q$ such that all of subcurves $Q$, $L_{2}$, and $L_{-2}$ are $\CC^*$-fixed ones. Then
\begin{equation}\label{eq12}
[\cO_C]=[\cO_Q]+t^{12}[\cO_{L_2}(-1)]+t^{-12}[\cO_{L_{-2}}(-1)].
\end{equation}
Furthermore, $\chi(\cO_C, \cO_C)=1-(t^{-4}+t^{-2}+t^{2}+t^{4})+(t^{-10}+t^0+t^{10})$.
\end{proposition}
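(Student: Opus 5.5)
The plan is to follow the pattern of the proof of Proposition~\ref{pr:l2q}. First I establish the $K$-theoretic decomposition \eqref{eq12} by a two-step filtration of $\cO_C$. Then I expand $\chi(\cO_C,\cO_C)$ bilinearly and evaluate each term with Lemma~\ref{d1l}, Lemma~\ref{qleq} and its sign-reversed version.

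\emph{Step 1: the decomposition.} By Lemma~\ref{fixra} and TABLE~\ref{ta:fixedcurve}, $Q$ meets $L_2$ only at $p_{10}$ and $L_{-2}$ only at $p_{-10}$, while $L_2\cap L_{-2}=\emptyset$. Consider the restriction maps
\[
\cO_C\twoheadrightarrow \cO_{L_2\cup Q}\twoheadrightarrow \cO_Q .
\]
\begin{itemize}
\item The kernel $\cN$ of the second map is $I_Q\cdot\cO_{L_2\cup Q}$. Since $L_2$ and $Q$ meet transversally at one point, $\cN$ is the ideal sheaf of $p_{10}$ in $L_2$, so $\cN\cong\cO_{L_2}(-1)$ up to a character.
\item In the chart $V_{10}$, where $Q=\{b_5=b_9=0\}$ and $L_2=\{b_5=b_8=0\}$, the local generator of $\cN$ is $\bar b_9$. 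This is exactly the computation already done in the proof of Proposition~\ref{pr:l2q}.
\item Comparing $\mathrm{wt}(b_9)=2$ with the weight $-10$ of $\cO_{L_2}(-1)|_{p_{10}}$ gives the twist $t^{12}$.
\item Because $L_{-2}$ is disjoint from $L_2$, the kernel of the first map is the ideal of $p_{-10}$ in $L_{-2}$. By the sign-reversal symmetry $p_{10}\leftrightarrow p_{-10}$, $b_i\leftrightarrow c_i$, it is $t^{-12}\cO_{L_{-2}}(-1)$, with generator $\bar c_9$.
\end{itemize}
Adding the classes yields \eqref{eq12}. One should check that $C$ is CM with Hilbert polynomial $4m+1$, which is immediate from the filtration since each graded piece is pure of dimension one.

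\emph{Step 2: the Euler pairing.} I expand by bilinearity. Terms pairing anything supported on $L_2$ with anything supported on $L_{-2}$ vanish by Theorem~\ref{eqeuler}, because the supports share no fixed point. Also $\chi(\cO_{L}(-1),\cO_L(-1))=\chi(\cO_L,\cO_L)$, and the twists $t^{\pm12}$ cancel in the diagonal terms. Hence
\[
\begin{split}
\chi(\cO_C,\cO_C)=\;&\chi(\cO_Q,\cO_Q)+\chi(\cO_{L_2},\cO_{L_2})+\chi(\cO_{L_{-2}},\cO_{L_{-2}})\\
&+t^{12}\chi(\cO_Q,\cO_{L_2}(-1))+t^{-12}\chi(\cO_{L_2}(-1),\cO_Q)\\
&+t^{-12}\chi(\cO_Q,\cO_{L_{-2}}(-1))+t^{12}\chi(\cO_{L_{-2}}(-1),\cO_Q).
\end{split}
\]
The diagonal terms come from TABLE~\ref{tab:rateulr}. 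Lemma~\ref{qleq} gives the cross terms for $L_2$:
\[
t^{12}\chi(\cO_Q,\cO_{L_2}(-1))=t^2-1,\qquad t^{-12}\chi(\cO_{L_2}(-1),\cO_Q)=t^{10}-t^8 .
\]
Its sign-reversed version gives the $L_{-2}$ cross terms, $t^{-2}-1$ and $t^{-10}-t^{-8}$. Summing, the constants give $2+2-1-1=2$. The terms $\pm t^{\pm8}$ cancel, and each of $t^{\pm2}$ survives with coefficient $-1$. The result is $1-(t^{-4}+t^{-2}+t^{2}+t^{4})+(t^{-10}+t^0+t^{10})$.

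\emph{Main obstacle.} The only delicate point is Step 1: identifying the kernels with the correct equivariant twists. In particular one must confirm that the generator of the kernel at $p_{\pm10}$ is $\bar b_9$ (resp.\ $\bar c_9$), and get the sign convention for the weight of $\cO_{L_{\pm2}}(-1)$ at $p_{\pm10}$ right. I would verify this with the explicit ideals in $V_{10}^\circ$ from \cite{CKK25}, via Macaulay2 as in Proposition~\ref{pr:l2q}. As a cross-check, the final answer should be invariant under $t\mapsto t^{-1}$, as the symmetry of $C$ demands.
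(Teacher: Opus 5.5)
Your proposal is correct and follows the paper's route. The decomposition \eqref{eq12} comes from the kernels of the restriction maps, with the local generator $\bar b_9$ (resp.\ $\bar c_9$) fixing the twist, exactly as in Proposition~\ref{pr:l2q}; the paper simply invokes symmetry there. The pairing is then expanded bilinearly using Lemma~\ref{d1l} and Lemma~\ref{qleq}. Your expansion attaches the twists correctly: $t^{12}\chi(\cO_Q,\cO_{L_2}(-1))$ and $t^{-12}\chi(\cO_Q,\cO_{L_{-2}}(-1))$. The paper's displayed intermediate formula swaps $t^{\pm12}$ on these two terms, which is a typo that does not affect its stated final answer.
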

\begin{proof}
By the symmetry of $C$ and the proof of Proposition \ref{pr:l2q}, the equality \eqref{eq12} holds. On the other hands, the Euler pairing becomes
\[\begin{split}
\chi(\cO_C, \cO_C)&=\chi(\cO_Q, \cO_Q)+\chi(\cO_{L_2}, \cO_{L_2})+\chi(\cO_{L_{-2}}, \cO_{L_{-2}})\\
&+t^{12}\left(\chi(\cO_Q, \cO_{L_{-2}}(-1))+\chi(\cO_{L_{-2}}(-1), \cO_Q)\right)+t^{-12}\left(\chi(\cO_Q, \cO_{L_2}(-1))+\chi(\cO_{L_2}(-1), \cO_Q)\right)\\
&=2-t^{-4}-t^{-2}-t^{2}-t^{4}+t^{10}+t^{-10}
\end{split}
\]
from Lemma \ref{d1l} and Lemma \ref{qleq}. Note that $\chi(\cO_{L_1}, \cO_{L_2})=0$ since $L_2\cap L_{-2}=\emptyset$.
\end{proof}
\begin{remark}
In Lemma \ref{d1l}, Corollary \ref{muchar}, Proposition \ref{pr:l2q}, and Proposition \ref{qll}, the weights arising from the deformation part coincide with those in Section 4.3 of \cite{CKK25}.
\end{remark}
\subsection{Integration of the primary and descendent insertions}
By using the results in the Section \ref{lcfinse}, we compute the integration of insertions which will be used for the proof of Conjecture \ref{conjor}.
\begin{proposition}\label{mainpro}
$\langle \tau_0(h_2)\rangle_{4}=168$, $\langle \tau_1(h_1)\rangle_{4}=-168$ and $\langle \tau_2(h_0:=1)\rangle_{4}=-28$.
\end{proposition}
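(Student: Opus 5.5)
We apply Proposition \ref{lcfinse} to $\bM_4(X)\cong\bH_4(X)$, whose $\CC^*$-fixed points are the structure sheaves of $\CC^*$-fixed CM curves with Hilbert polynomial $4m+1$. The first step is to list all such fixed curves. Using Lemma \ref{fixra} and the classification in \cite{CKK25}, the candidates are built from $L_{\pm2}$, $Q$, and $C_4$ with total degree $4$:
\begin{itemize}
\item the quartic $C_4$;
\item the multiplicity-four lines $D_{\pm4}$ of Proposition \ref{multfil};
\item the curves $L_{\pm2}^2\cup Q$ of Proposition \ref{pr:l2q};
\item the chain $L_2\cup L_{-2}\cup Q$ of Proposition \ref{qll};
\item any further configurations that \cite{CKK25} shows occur, such as $Q\cup Q'$ or mixed multiplicity lines, which we check against the fixed-point list there.
\end{itemize}
We expect the list to be finite, so the fixed locus is isolated. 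For each fixed curve we then read off $\chi(F,F)-1$ from Lemma \ref{d1l}, Corollary \ref{muchar}, Proposition \ref{pr:l2q}, and Proposition \ref{qll}.

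The key observation is that in every case except $D_{\pm4}$, the obstruction part of $\chi(\cO_C,\cO_C)$ contains the trivial character $t^0$. These are the terms $+t^0$ for $C_4$, for $L^2\cup Q$, and for $L_2\cup L_{-2}\cup Q$. Hence $e^T(\chi(F,F)-1)$ contains a factor $e^T(t^0)=0$, and these curves contribute nothing. Only $D_4$ and $D_{-4}$ survive.

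For $D_4$, Table \ref{tab:multiple-curves-chi} gives
\[
e^T(\chi-1)=\frac{(14\lambda)(16\lambda)(18\lambda)}{(2\lambda)(4\lambda)^2(6\lambda)}=\frac{21}{2}.
\]
By the sign-reversal remark, $D_{-4}$ gives the same value, since the degrees match: three factors over four factors, total $(-1)^{-1}$ sign… We will track this sign carefully. In fact $e^T$ of three obstruction weights negated gives $(-1)^3$, and four tangent weights negated gives $(-1)^4$, so $D_{-4}$ gives $-21/2$ times the opposite orientation. Orientation conventions will be fixed at this stage.

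Next we compute the insertion factor. By \eqref{kclsd} and \eqref{cheq1}, at $p_{12}$ we have
\[
[\cO_{D_4}]_{p_{12}}=\lambda_{-1}(N^*_{L,p_{12}})\bigl(1+(t^8+t^6+t^4)t^{-12}\bigr),
\]
with $N^*_{L,p_{12}}$ of weights $-6,-4$. The analogous formula holds at $p_{10}$, with weights $-2,-10$ and $\cO_L(-1)|_{p_{10}}=t^{-10}$. We take $\mathrm{ch}^T$, multiply by the factor in \eqref{clssx} with $m=12$ (resp. $10$), and extract the degree-$(i+2)$ part. We then multiply by $\gamma|_p$, where $h_1|_p=m\lambda$, $h_2|_p=(m\lambda)^2/22$, and $h_0=1$, and divide by $e^T(T_pX)$, whose weights are read from Table \ref{tab:fixedx}. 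Summing over $p_{12},p_{10}$ should give a constant, namely $16$ for $\tau_0(h_2)$ by degree $4\cdot h_2\cdot[\text{line}]\cdot$… This serves as a consistency check: $\int_X h_2\cdot\mathrm{ch}_2(\cO_{D_4})$ must equal $4$ times the line-degree pairing, independent of $\lambda$.

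We do the same for $D_{-4}$ and combine with the factors $\pm 21/2$. The targets are $168=16\cdot\frac{21}{2}$ for $\tau_0(h_2)$, together with $-168$ and $-28$ for the other two insertions. The main obstacle is bookkeeping rather than ideas. Two points need care. First, the descendant contributions at individual fixed points are not constants; only their sum over $p_{12},p_{10}$ is, so the $\mathrm{ch}_3,\mathrm{ch}_4$ expansions and the Todd correction must be carried exactly. Second, the relative sign of the $D_{\pm4}$ contributions must be fixed consistently, for example by requiring that $\langle\tau_0(h_2)\rangle_4$ be positive. As a sanity check on the vanishing argument and the conventions, we would first rerun the same procedure for $d\le3$ to recover \cite[Proposition 3.17]{CLW24}.
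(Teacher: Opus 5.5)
Your strategy matches the paper's: the same six fixed points, the same vanishing via the weight-zero obstruction $t^0$, and the same equivariant Chern characters at $p_{12},p_{10}$ plugged into \eqref{lccom}. However, the quantitative part contains errors that your plan cannot absorb.

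First, the Euler class is wrong. We have $e^T(\chi(\cO_{D_4},\cO_{D_4})-1)=\frac{14\cdot 16\cdot 18\,\lambda^3}{2\cdot 16\cdot 6\,\lambda^4}=\frac{21}{\lambda}$, not $\frac{21}{2}$, because the virtual tangent space has rank $4-3=1$. Correspondingly, $\tau_i(\gamma)|_{[\cO_{D_4}]}$ is not a constant but a homogeneous class of degree one in $\lambda$. So your consistency check is false: non-equivariantly, $\int_X h_2\cdot\mathrm{ch}_2(\cO_{D_4})=4\,h_2^2=0$. Doing the sum, $p_{12}$ and $p_{10}$ contribute $\frac{144}{11}\lambda$ and $-\frac{100}{11}\lambda$ to $\tau_0(h_2)$, so $\tau_0(h_2)|_{[\cO_{D_4}]}=4\lambda$. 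One similarly gets $-4\lambda$ and $-\frac{2}{3}\lambda$ for $\tau_1(h_1)$ and $\tau_2(1)$, so $D_4$ contributes $84$, $-84$, $-14$. Your factorization $168=16\cdot\frac{21}{2}$ is a coincidence of two errors. In your normalization $D_4$ alone would already give $168$, and adding $D_{-4}$ would give $336$ or $0$.

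Second, the relative sign of $D_{-4}$ is not a convention you may fix by requiring positivity. The integral is taken against the virtual class of the perfect obstruction theory $(\Ext^1,\Ext^2)$ on $\bM_4(X)$, so every fixed-point term in Theorem \ref{virloc} is determined. The orientation sign $(-1)^{d+1}$ enters only globally, in the proof of Theorem \ref{pfmeeting1}. The missing observation is that all data at $D_{-4}$ (at $p_{-12},p_{-10}$) arise from those at $D_4$ by $\lambda\mapsto-\lambda$, and both factors are odd under this substitution:
\begin{itemize}
\item the Euler class has degree $-1$ and becomes $-\frac{21}{\lambda}$, as you noticed;
\item each local insertion term $\gamma|_p\{\mathrm{ch}^T\cdot\mathrm{td}^T(K_X)^{-1}\}_{i+2}/e^T(T_pX)$ has degree $(2-i)+(i+2)-3=1$, so it also changes sign.
\end{itemize}
Hence the two contributions are equal, which is exactly the factor $2$ in the paper's formula.
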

\begin{proof}
The fixed locus of $\bM_4$ consists of the following six $T$-fixed stable sheaves (see \cite[Section 4.3]{CKK25}):
\[
\bM_4^T
=
\left\{
[\cO_C]
\;\middle|\;
C \in \{D_{\pm4},\, L_{\pm2}^2 \cup Q,\, L_2 \cup L_{-2} \cup Q,\, C_4\}
\right\}.
\]
Each curve in $\bM_4^T$, except for the multiple line $D_{\pm4}$, has a zero-weight part in the obstruction space, and hence its Euler class vanishes (Lemma \ref{d1l}, Proposition \ref{pr:l2q} and Proposition \ref{qll}). Therefore, it suffices to consider the contribution from $D_{\pm4}$. Since $D_4$ and $D_{-4}$ are symmetric, their contributions to the integration are the same. Hence for the computation of local form \eqref{lccom} of Proposition \ref{lcfinse}, we need the followings. Over the fixed point $p_{12}$ (resp. $p_{10}$), the restriction of the hyperplane bundle is $\cO_X(1)|_{p_{12}}=\CC\otimes t^{12}$ (resp. $\cO_X(1)|_{p_{10}}=\CC\otimes t^{10}$) and thus one can obtain the character $h_i|_{p_{j}}$ and the inverse of equivariant Todd classes \eqref{clssx}. Also, from \eqref{kclsd} and Theorem \ref{selfin}, we know the equivariant Chern character \eqref{cheq1} of $\cO_{D_4}$ along the fixed points. Finally, the weights of tangent space $T_X$ at each fixed point $p$ are given in TABLE \ref{tab:fixedx}. As summery, we obtain the list in TABLE \ref{tab:D4_localization}.
\renewcommand{\arraystretch}{1.1}
\begin{table}[htbp]
\begin{center}
\resizebox{\textwidth}{!}{%
\begin{tabular}{|c|l|M{2.2cm}|M{2.2cm}|M{6.2cm}|M{3.6cm}|}
\hline
\multirow{2}{*}{Fixed curve}
& $p_1$
& $h_1|_{p_1}$
& $h_2|_{p_1}$
& $\mathrm{ch}^T(F|_{p_1})$
& $e^T(T_{X}|_{p_1})$
\\
\cline{2-6}
& $p_2$
& $h_1|_{p_2}$
& $h_2|_{p_2}$
& $\mathrm{ch}^T(F|_{p_2})$
& $e^T(T_{X}|_{p_2})$
\\
\hline\hline
\multirow{2}{*}{$D_{4}$}
& $p_{12}$
& $12\lambda$
& $\frac1{22}(12\lambda)^2$
& $\displaystyle
\begin{aligned}
&(1+e^{-4\lambda}+e^{-6\lambda}+e^{-8\lambda})\\
&\cdot(1-e^{-6\lambda})(1-e^{-4\lambda})
\end{aligned}$
& $\displaystyle (4\lambda)\cdot(6\lambda)\cdot(2\lambda)$
\\
\cline{2-6}
& $p_{10}$
& $10\lambda$
& $\frac1{22}(10\lambda)^2$
& $\displaystyle
\begin{aligned}
&(1+e^{-2\lambda}+e^{-4\lambda}+e^{-6\lambda})\\
&\cdot(1-e^{-2\lambda})(1-e^{-10\lambda})
\end{aligned}$
& $\displaystyle (2\lambda)\cdot(10\lambda)\cdot(-2\lambda)$
\\
\hline
\end{tabular}%
}
\caption{Localization data at the fixed curve $D_4$.}
\label{tab:D4_localization}
\end{center}
\end{table}
One the other hand, the equivariant Euler class of $\chi(\cO_{D_4}, \cO_{D_4})-1$ (TABLE \ref{tab:multiple-curves-chi}) is
\[
e^T(\chi(\cO_{D_4}, \cO_{D_4})-1)=\frac{(14\lambda)(16\lambda)(18\lambda)}{(2\lambda)(4\lambda)^2(6\lambda)}.
\]
By plugging this one and the data of TABLE \ref{tab:D4_localization} into \eqref{lccom}, we finally obtain the invariants as follows.
\[
\begin{aligned}
\langle \tau_0(h_2)\rangle_{4}
&= 2\cdot\Bigg(
\frac{\frac1{22}(12\lambda)^2\cdot
\Bigl\{\mathrm{ch}^T(\cO_{D_4}|_{p_{12}})\cdot\frac{1-e^{12\lambda}}{-12\lambda}\Bigr\}_{2}}
{(4\lambda)(6\lambda)(2\lambda)}
+
\frac{\frac1{22}(10\lambda)^2\cdot
\Bigl\{\mathrm{ch}^T(\cO_{D_4}|_{p_{10}})\cdot\frac{1-e^{10\lambda}}{-10\lambda}\Bigr\}_{2}}
{(2\lambda)(10\lambda)(-2\lambda)}
\Bigg)\cdot \frac{(14\lambda)(16\lambda)(18\lambda)}{(2\lambda)(4\lambda)^2(6\lambda)}
\\
&=168,
\\
\langle \tau_1(h_1)\rangle_{4}
&= 2\cdot\Bigg(
\frac{(12\lambda)\cdot
\Bigl\{\mathrm{ch}^T(\cO_{D_4}|_{p_{12}})\cdot\frac{1-e^{12\lambda}}{-12\lambda}\Bigr\}_{3}}
{(4\lambda)(6\lambda)(2\lambda)}
+
\frac{(10\lambda)\cdot
\Bigl\{\mathrm{ch}^T(\cO_{D_4}|_{p_{10}})\cdot\frac{1-e^{10\lambda}}{-10\lambda}\Bigr\}_{3}}
{(2\lambda)(10\lambda)(-2\lambda)}
\Bigg)\cdot \frac{(14\lambda)(16\lambda)(18\lambda)}{(2\lambda)(4\lambda)^2(6\lambda)}
\\
&=-168,
\\
\langle \tau_2(1)\rangle_{4}
&= 2\cdot\Bigg(
\frac{
\Bigl\{\mathrm{ch}^T(\cO_{D_4}|_{p_{12}})\cdot\frac{1-e^{12\lambda}}{-12\lambda}\Bigr\}_{4}}
{(4\lambda)(6\lambda)(2\lambda)}
+
\frac{\Bigl\{\mathrm{ch}^T(\cO_{D_4}|_{p_{10}})\cdot\frac{1-e^{10\lambda}}{-10\lambda}\Bigr\}_{4}}
{(2\lambda)(10\lambda)(-2\lambda)}
\Bigg)\cdot \frac{(14\lambda)(16\lambda)(18\lambda)}{(2\lambda)(4\lambda)^2(6\lambda)}
\\
&=-28.
\end{aligned}
\]
\end{proof}
For $d \leq 3$, it follows from Lemma \ref{d1l} and Remark \ref{re:def} that the fixed curves $D_{\pm k}$, $1\leq k\leq 3$ make only contribution to the integrals of the insertions. Therefore, the following proposition is obtained by suitably adjusting the equivariant Chern character and the Euler class of virtual tangent space of $\bM_d$.
\begin{proposition}[cf. \protect{\cite[Proposition 3.17]{CLW24}}]\label{v22dt}
The invariants $\langle \tau_i(h_{2-i})\rangle_{d}$, ($1\leq d\leq 4$) are given by the numbers of the following table:
\begin{table}[htbp]
\begin{center}
\begin{tabular}{|l||M{2.2cm}|M{2.2cm}|M{2.2cm}|}
\hline
$d$ & $i=0$ & $i=1$ & $i=2$\\
\hline\hline
$1$ & $2$   & $22$   & $-\frac{1}{3}$\\
\hline
$2$ & $7$   & $28$   & $-\frac{7}{6}$\\
\hline
$3$ & $28$  & $28$   & $-\frac{14}{3}$\\
\hline
$4$ & $168$ & $-168$ & $-28$\\
\hline
\end{tabular}
\caption{The integrations $\langle \tau_i(h_{2-i})\rangle_{d}$}
\label{tab:invariants_d1to4}
\end{center}
\end{table}\end{proposition}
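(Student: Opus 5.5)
The plan is to run, for each $d\le 3$, the same virtual localization computation that Proposition \ref{mainpro} carries out for $d=4$. The row $d=4$ is exactly Proposition \ref{mainpro}, so only $d\le 3$ needs an argument. By \cite[Proposition 4.13]{CKK25} and \cite[Lemma 3.17]{CC11}, the $\CC^*$-fixed points of $\bM_d(X)$ are structure sheaves of $\CC^*$-fixed CM curves of degree $d$. For $d\le3$ these are built from $L_{\pm2}$, $Q$ and the multiple lines $D_{\pm k}$ (see \cite[Section 4.3]{CKK25}):
\begin{itemize}
\item for $d=1$: $L_{\pm2}$;
\item for $d=2$: $D_{\pm2}$ and $Q$;
\item for $d=3$: $D_{\pm3}$ and $L_{\pm2}\cup Q$.
\end{itemize}

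\textbf{Step 1: discard the curves involving $Q$.} By Lemma \ref{d1l}, the class $\chi(\cO_Q,\cO_Q)$ contains $t^0$ in its obstruction part. By Remark \ref{re:def}, the same holds for $\chi(\cO_{L\cup Q},\cO_{L\cup Q})$. Hence $e^T(\chi(F,F)-1)=0$ for these sheaves, and in \eqref{lccom} only $D_{\pm k}$ ($k=d$) contribute. The weights are symmetric under $t\mapsto t^{-1}$, so $D_{-k}$ contributes the same as $D_k$, and each invariant is twice the $D_k$ contribution.

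\textbf{Step 2: local data for $D_k$.} From \eqref{kclsd} and Theorem \ref{selfin},
\[
[\cO_{D_k}]_{p}=\Bigl(1+\sum_{j=1}^{k-1}t^{10-2j}\,w_p\Bigr)\lambda_{-1}(N^*_{L/X,p}),
\]
where $w_p$ is the weight of $\cO_L(-1)|_p$, i.e. $t^{-12}$ at $p_{12}$ and $t^{-10}$ at $p_{10}$. This gives the truncations
\begin{itemize}
\item at $p_{12}$: $(1+e^{-4\lambda}+\cdots)(1-e^{-6\lambda})(1-e^{-4\lambda})$;
\item at $p_{10}$: $(1+e^{-2\lambda}+\cdots)(1-e^{-2\lambda})(1-e^{-10\lambda})$,
\end{itemize}
keeping $k$ summands in the first factor. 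Table \ref{tab:multiple-curves-chi} then gives the Euler class of the virtual tangent space:
\[
e^T(\chi-1)=\begin{cases}
\dfrac{8\lambda}{(2\lambda)(4\lambda)}, & k=1,\\[2ex]
\dfrac{14\lambda}{(2\lambda)(4\lambda)}, & k=2,\\[2ex]
\dfrac{(14\lambda)(16\lambda)}{(2\lambda)(4\lambda)(6\lambda)}, & k=3.
\end{cases}
\]
The $h_i|_p$, the inverse Todd classes and $e^T(T_X|_p)$ are as in \eqref{clssx} and Table \ref{tab:fixedx}.

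\textbf{Step 3: substitute.} Plug these data into \eqref{lccom} with $\gamma=h_2,h_1,1$, extracting the degree $i+2$ part. Each result must be a constant, since the nonequivariant limit exists. Use this as a consistency check: the $\lambda$-dependence has to cancel between $p_{12}$ and $p_{10}$.

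The main obstacle is orientation and sign. The paper's convention for $e^T$ of a K-theory class, together with the overall sign of the moduli orientation, must reproduce the listed values consistently, for instance $2$ for $d=1$, $i=0$. I would fix the convention on $d=1$ by matching it against \cite{CLW24}. Then I would expand the Chern characters to order $\lambda^4$ carefully and compare the results with \cite[Proposition 3.17]{CLW24}.
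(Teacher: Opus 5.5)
Your proposal is correct and follows the paper's argument. You discard the $Q$-containing fixed sheaves because their obstruction space has a weight-zero part (Lemma \ref{d1l}, Remark \ref{re:def}). You then evaluate \eqref{lccom} only on $D_{\pm k}$, using the truncated Chern character and the Euler classes from Table \ref{tab:multiple-curves-chi}, exactly as Proposition \ref{mainpro} does for $d=4$.

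Your sign worry is unnecessary, and you should not tune a sign on $d=1$ to match \cite{CLW24}. The class on $\bM_d(X)$ comes from the ordinary perfect obstruction theory $(\Ext^1,\Ext^2)$, so no orientation choice enters at this stage. Formula \eqref{lccom}, with $e^T(\chi-1)=e^T(\Ext^2)/e^T(\Ext^1)$, reproduces every entry of the table directly. For example, for $d=1$, $i=0$ the bracket equals $\lambda$ and $e^T(\chi-1)=1/\lambda$, giving $2$. The sign $(-1)^{d+1}$ only appears later, in Theorem \ref{pfmeeting1}.
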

\begin{remark}
In TABLE \ref{tab:invariants_d1to4} of Proposition \ref{v22dt}, the ratio of invariants for $i=0$ and $2$ are constant: $\frac{\langle \tau_2(h_0)\rangle_{d}}{\langle \tau_0(h_2)\rangle_{d}}=-\frac{1}{6}$ for all $1\leq d\leq 4$. This verifies the prediction in \cite[Section~1.7]{CT21}.
\end{remark}
\subsection{Proof of Conjecture \ref{conjor}}\label{subproof}
Let us recall the conjecture between $\mathrm{DT}$ and GV invariants. Let us denote $n_{0,\beta}(\gamma)$ (resp. $n_{1,\beta}$) be the genus 0 (resp. genus 1) Gopakumar-Vafa invariants on a CY 4-fold $Y$ defined in \cite{KP08}.
The number $m_{\beta_1,\beta_2}$ is a virtual count of degree $\beta_1$ curves in $Y$ which meets with degree $\beta_2$ curves, called \emph{meeting invariants} (\cite[Section 0.3]{KP08}). In our setting, the meeting invariants $m_{\beta_1,\beta_2}\in \ZZ$ is determined by the following inductive rules. Recall that the cohomology degree of $h_i\in \rH^*(X)$ is $\deg(h_i)=2i$.
\begin{enumerate}[(a)]
\item \label{a:a} $m_{\beta_1\!,\,\beta_2}=m_{\beta_2,\,\beta_1}$. 
\item \label{b:b} Also, if either $\deg(\beta_1)\leq 0$ or $\deg(\beta_2)\leq 0$, then $m_{\beta_1,\beta_2}=0$.
\item \label{c:c} If $\beta_1\neq \beta_2$, then $m_{\beta_1,\beta_2}=-22n_{0,\beta_1}(h_2)n_{0,\beta_2}(h_2)+m_{\beta_1,\beta_2-\beta_1}+m_{\beta_1-\beta_2,\beta_2}.
$
\item \label{d:d} If $\beta_1= \beta_2=\beta$, then $
m_{\beta,\beta}=2n_{0,\beta}(h_2)-22 n_{0,\beta}(h_2) n_{0,\beta}(h_2)-\sum_{\beta_1+\beta_2=\beta}m_{\beta_1,\beta_2}.
$
\end{enumerate}
Here we use the relation of the invariants on $Y$ and those of $X$ as follow. Let $\overline{Y}=\mathbb{P}(K_X\oplus \mathcal{O}_X)$ be a projective compactification of $Y$ with a zero section $i: X\hookrightarrow \overline{Y}$. Let $S_1:=i_*h_1$ and $S_2:=\pi^*h_2$ be the basis of $\rH^4(\overline{Y}, \ZZ)$. Then
\[
n_{0,d}(S_1)=-22n_{0,d}(h_2), \;n_{0,d}(S_2)=n_{0,d}(h_2)
\]
by the self-intersection formula (For the detail computation, see \cite[Section 2.4]{CT21} and \cite[Section 4.1]{CLW24}). 
Under this setting, the equality \eqref{eq:mov1} of Conjecture \ref{conjor} becomes
\begin{align}\label{eqct}
 \langle \tau_1(h_1) \rangle_{\beta} & =\frac{11n_{0,\beta}(h_2)}{h_1\cdot \beta}-\sum_{\beta_1+\beta_2=\beta} \frac{(h_1\cdot \beta_1)(h_1\cdot \beta_2)}{4(h_1\cdot \beta)} m_{\beta_1,\beta_2}-\sum_{k\geq1, k|\beta} \frac{(h_1\cdot \beta)}{k}n_{1,\beta/k}
\end{align}
for $\beta=d[\mathrm{line}]$ under the some suitable choice of orientation on the moduli space $\bM_d(X)$. It is expected that the invariants $n_{1,d}$ come from a space of degree $d$ elliptic curves in $Y$ (cf. \cite[Section 3 and 5]{KP08}).By \cite[Corollary 3.9]{CFK23}, the space of elliptic curves of degree $d \leq 6$ is empty. Therefore, it is reasonable to expect that $n_{1,d}=0$.
\begin{theorem}\label{pfmeeting1}
Conjecture \ref{conjor} holds for $Y=\mathrm{Tot}(K_X)$ when we assume that $n_{1,d}=0$ for $1\leq d\leq 4$.
\end{theorem}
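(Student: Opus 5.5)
The plan is to verify \eqref{eqct} directly for $\beta=d[\mathrm{line}]$ with $1\leq d\leq 4$, with $d=4$ being the new case. Every ingredient on the right-hand side is read off from Proposition \ref{v22dt}. First, part (a) of Conjecture \ref{conjor} determines the genus-zero invariants. For $d\leq 3$ this was established in \cite[Theorem 4.2]{CLW24}. For $d=4$, I would take $n_{0,4}(h_2)=\langle\tau_0(h_2)\rangle_4=168$ from Proposition \ref{mainpro}, using the genus-zero GV invariants of $Y$ (via the compactification $\overline{Y}$ and the relations $n_{0,d}(S_1)=-22n_{0,d}(h_2)$, $n_{0,d}(S_2)=n_{0,d}(h_2)$). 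This gives the input
\[
(n_{0,1}(h_2),n_{0,2}(h_2),n_{0,3}(h_2),n_{0,4}(h_2))=(2,7,28,168).
\]
The orientation on $\bM_4(X)$ is fixed as the one making Proposition \ref{mainpro} hold, namely the sign for which the $D_{\pm4}$ contributions enter as computed there.

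Second, I would compute the meeting invariants $m_{d_1,d_2}$ with $d_1+d_2\leq 4$ from the inductive rules (a)--(d), in increasing order of $d_1+d_2$:
\[
m_{1,1},\quad m_{1,2}=m_{2,1},\quad m_{2,2},\quad m_{1,3}=m_{3,1}.
\]
Rule (d) gives $m_{1,1}$ with an empty correction sum. Rule (c) reduces $m_{1,2}$ to $m_{1,1}$, since the other term has a nonpositive degree and vanishes by (b). Then $m_{2,2}$ is obtained from (d) with correction $m_{1,1}$, and $m_{1,3}$ from (c) with correction $m_{1,2}$. Only these values enter the $d=4$ identity, because $\beta_1+\beta_2=4[\mathrm{line}]$ with both degrees positive forces $(\beta_1,\beta_2)\in\{(1,3),(2,2),(3,1)\}$.

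Third, I would substitute everything into \eqref{eqct} with $h_1\cdot d[\mathrm{line}]=d$ and $n_{1,d}=0$ by hypothesis. For $d=4$ this reads
\[
\langle\tau_1(h_1)\rangle_4=\frac{11\cdot168}{4}-\frac{1}{16}\bigl(2\cdot 3\,m_{1,3}+4\,m_{2,2}\bigr),
\]
to be compared with $-168$ from Proposition \ref{mainpro}. As a consistency check on conventions, the same substitution should be run for $d=1,2,3$ against the values $22,28,28$ in Table \ref{tab:invariants_d1to4}, which reproduces \cite[Theorem 4.2]{CLW24}.

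I expect the main obstacle to be bookkeeping of conventions rather than geometry. The following points are all sign- and factor-sensitive:
\begin{itemize}
\item whether the sum over $\beta_1+\beta_2=\beta$ runs over ordered or unordered pairs;
\item the precise normalization of rule (d), in particular the sign and the factor $2$ in the term $2n_{0,\beta}(h_2)$;
\item the overall orientation sign on $\bM_4$, which acts on both $\langle\tau_0(h_2)\rangle_4$ and $\langle\tau_1(h_1)\rangle_4$ simultaneously;
\item the transfer from $S_1,S_2$ on $\overline{Y}$ to $h_2$ on $X$.
\end{itemize}
A naive evaluation of the display above does not obviously give $-168$. I would therefore first calibrate every convention on the already established cases $d=1,2,3$, where the identity is known to hold. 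With the conventions fixed this way, I would redo the $d=4$ arithmetic, expecting the identity to come out exactly with the orientation chosen in Proposition \ref{mainpro}.
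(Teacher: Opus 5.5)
\textbf{There is a genuine gap, and it is the orientation.} The proof is nothing but this arithmetic check, and with the inputs you commit to, the check fails.

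Take your data $(n_{0,1},n_{0,2},n_{0,3},n_{0,4})(h_2)=(2,7,28,168)$ and orient $\bM_4$ so that $\langle\tau_1(h_1)\rangle_4=-168$. Rules (a)--(d) then give $m_{1,1}=-84$, $m_{1,2}=-392$, $m_{2,2}=-980$ and $m_{1,3}=-1624$. Your right-hand side at $d=4$ is therefore $462+854=1316\neq-168$.

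Your proposed consistency check fails too. At $d=2$ you would get $\frac{77}{2}+\frac{84}{8}=49\neq 28$, so these inputs do not reproduce \cite[Theorem 4.2]{CLW24}. Your closing expectation is also false, namely that the identity comes out with the orientation of Proposition \ref{mainpro}. Even with the correct lower-degree data, that orientation on $\bM_4$ gives $462+630=1092\neq-168$.

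\textbf{The missing ingredient} is the orientation convention of \cite{CLW24}: the virtual class of $\bM_d$ carries the sign $(-1)^{d+1}$. This sign acts simultaneously on $\langle\tau_0(h_2)\rangle_d$ and $\langle\tau_1(h_1)\rangle_d$. Hence the correct inputs are $n_{0,d}(h_2)=2,-7,28,-168$ for $d=1,\dots,4$. The meeting invariants become
\begin{itemize}
\item $m_{1,1}=-84$;
\item $m_{1,2}=-22\cdot2\cdot(-7)-84=224$;
\item $m_{1,3}=-22\cdot 2\cdot 28+224=-1008$;
\item $m_{2,2}=-14-1078+84=-1008$.
\end{itemize}
The identity to verify is $-\langle\tau_1(h_1)\rangle_4=-\frac{11\cdot 168}{4}-\frac{1}{16}(6m_{1,3}+4m_{2,2})$, that is, $168=-462+630$, which holds.

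Your calibration idea would actually have found this. The $d=2$ and $d=3$ identities force the signs $+,-,+$ on $\bM_1,\bM_2,\bM_3$. In the $d=4$ identity the meeting-invariant contribution $630$ does not depend on the sign of $\bM_4$, and the identity holds only for the sign $-1$. However, you neither carry out this calibration nor state the resulting signs, and you explicitly commit to the opposite ones. So the proposal as written does not establish the theorem.
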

\begin{proof}
Since $n_{1,d}=0$ for $1 \le d \le 4$, the conjectural identity in \eqref{eqct} can be written as
\begin{equation}
\langle \tau_1(h_1) \rangle_d
=\frac{11n_{0,d}(h_2)}{d}
-\sum_{d_1+d_2=d}\frac{d_1 d_2}{4d}\, m_{d_1,d_2},\; 1 \le d \le 4,
\end{equation}
Under the suitable choice of the orientation of the virtual class $[M_d]^{\mathrm{vir}}$.
Following the orientation of the paper \cite{CLW24}, 
the sign of a virtual cycle of the moduli space $M_d$ is given by $(-1)^{d+1}$. 
Then for our case, we must prove the following equality:
\begin{equation}\label{fineq}
-\langle \tau_1(h_1) \rangle_4=-\frac{11\times \langle \tau_0(h_2) \rangle_4}{4}-\frac{1}{16}\times\left(6\times m_{1,3}+4\times m_{2,2}\right).
\end{equation}
Firstly, note that
\[
m_{1,1}=-84, \;
m_{1,2}=224, \;
n_{0,1}(h_2)=2, \;
n_{0,2}(h_2)=-7, \;
n_{0,3}(h_2)=28
\]
by \cite{CLW24} or a direct computation using the values in TABLE \ref{tab:invariants_d1to4}. Secondly, by the recursive formulas (\ref{a:a})--(\ref{d:d}) of meeting invariants, the meeting invariants $m_{1,3}$ and $m_{2,2}$ of the variety $X$ are given by
\begin{equation*}
\begin{split}
m_{1,3}&=-22 n_{0,1}(h_2) n_{0,3}(h_2) + m_{1,2}=-22 \times 2 \times 28 + 224=-1008.\\
m_{2,2}&=2 n_{0,2}(h_2)-22 n_{0,2}(h_2)^2-m_{1,1} =2 \times (-7)-22 \times (-7)^2+84=-1008.
\end{split}
\end{equation*}
Plugging these numbers and the results in TABLE \ref{tab:invariants_d1to4} into the both sides of equality \eqref{fineq}, we finish the proof of the claim.
\end{proof}
\bibliographystyle{alpha}
\newcommand{\etalchar}[1]{$^{#1}$}

\end{document}